 \DeclareFontFamily{U}{wncy}{}
    \DeclareFontShape{U}{wncy}{m}{n}{<->wncyr10}{}
    \DeclareSymbolFont{mcy}{U}{wncy}{m}{n}
    \DeclareMathSymbol{\Sha}{\mathord}{mcy}{"58} 
\theoremstyle{plain}
\newtheorem{theorem}{Theorem}[section]
\newtheorem{prop}[theorem]{Proposition}
\newtheorem{lemma}[theorem]{Lemma}
\newtheorem{cor}[theorem]{Corollary}
\newtheorem*{theoremintro*}{Theorem}
\theoremstyle{definition}
\newtheorem{example}[theorem]{Example}
\newtheorem{examples}[theorem]{Examples}
\theoremstyle{remark}
\newtheorem{remark}[theorem]{Remark}
\newtheorem{remarks}[theorem]{Remarks}
\newtheorem*{ack}{Acknowledgements}
\newcommand{\F}{\mathbb F}
\newcommand{\R}{\mathbb R}
\renewcommand{\P}{\mathbb P}
\newcommand{\Q}{\mathbb Q}
\newcommand{\Or}{\text{O}}
\newcommand{\SO}{\text{SO}}
\newcommand{\SHP}{\text{Hasse principle for isotropy}}
\newcommand{\WHP}{\text{Hasse principle for isometry}}
\newcommand{\Char}{\text{char }}
\title[Hasse principles for quadratic forms]{Hasse principles for quadratic forms over function fields}
\date{\today}
\author[Cassady]{Connor Cassady}
\thanks{\textit{Mathematics Subject Classification} (2010): 11E04, 11E20, 14G12 (primary);  12G05, 12J10, 13J10 (secondary).\\ 
\textit{Key words and phrases}: quadratic forms, Hasse principles, local-global principles, valued fields, Galois cohomology, unramified cohomology.\\
The author was supported by NSF grants DMS-1805439 and DMS-2102987.}
\begin{document}

\maketitle
\vspace{-0.5cm}
\begin{abstract}
We investigate the Hasse principles for isotropy and isometry of quadratic forms over finitely generated field extensions with respect to various sets of discrete valuations. Over purely transcendental field extensions of fields that satisfy property $\mathscr{A}_i(2)$ for some $i$, we find numerous counterexamples to the Hasse principle for isotropy with respect to a relatively small set of discrete valuations. For finitely generated field extensions $K$ of transcendence degree $r$ over an algebraically closed field of characteristic $\ne 2$, we use the $2^r$-dimensional counterexample to the Hasse principle for isotropy due to Auel and Suresh to obtain counterexamples of lower dimensions with respect to the divisorial discrete valuations induced by a variety with function field $K$. 
\end{abstract}

\section*{Introduction}
The Hasse-Minkowski Theorem states that a quadratic form defined over a global field is isotropic if and only if it is isotropic over all completions of the field, and is one of the first examples of a \textit{local-global principle} for quadratic forms. This local-global principle for isotropy implies the local-global principle for isometry of quadratic forms over global fields. That is, two quadratic forms over a global field are isometric if and only if they are isometric over all completions of the field. For other fields, these local-global principles can be phrased in terms of discrete valuations on the field. 

For any field $k$ of characteristic $\ne 2$, let $V$ be a non-empty set of non-trivial discrete valuations on $k$. We say that a quadratic form $q$ over $k$ satisfies the \textit{Hasse principle for isotropy with respect to ~$V$} if $q$ being isotropic over the $v$-adic completion $k_v$ for all $v \in V$ implies that $q$ is isotropic over $k$. Given a pair of quadratic forms $q_1$ and $q_2$ over $k$, we say that $q_1$ and $q_2$ satisfy the \textit{Hasse principle for isometry with respect to $V$} if $q_1$ and $q_2$ being isometric over $k_v$ for all $v \in V$ implies that $q_1$ and ~$q_2$ are isometric over $k$. Two quadratic forms $q_1$ and $q_2$ are isometric if and only if they have the same dimension and $q_1 \perp -q_2$ is hyperbolic, so the $\WHP$ with respect to ~$V$ is satisfied if and only if every even-dimensional quadratic form over $k$ that is hyperbolic over ~$k_v$ for all $v \in V$ is also hyperbolic over $k$.

In this article, we investigate whether or not these Hasse principles hold with respect to various sets of discrete valuations on function fields of characteristic $\ne 2$. The main result is the following (see Section \ref{main results} for terminology):
\begin{theoremintro*}[\ref{main theorem}]
Let $k$ be an algebraically closed field of characteristic $\ne 2$ that is not the algebraic closure of a finite field. Let $K$ be any finitely generated field extension of transcendence degree $r \geq 2$ over $k$, and let $V$ be any non-empty set of non-trivial divisorial discrete valuations on $K$ that satisfies the finite support property. Then for any integer $m \ne 3$ such that
\[
	2^{r-1} < m \leq 2^r,
\]
there exists an $m$-dimensional quadratic form over $K$ that violates the Hasse principle for isotropy with respect to $V$.
\end{theoremintro*}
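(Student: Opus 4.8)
The plan is to deduce the result from the $2^r$-dimensional counterexample of Auel and Suresh by passing to suitably chosen subforms, after first using the finite support property to replace $V$ by a set of valuations coming from a single model. Concretely, every $v \in V$ is the valuation of a prime divisor on some normal model of $K$, and I would use the finite support property to choose one normal projective $k$-variety $X$ with $k(X) = K$ such that all but finitely many $v \in V$ are valuations of prime divisors of $X$, absorbing the finitely many exceptions after a further birational modification of $X$. Since a quadratic form violating the Hasse principle with respect to a set of valuations also violates it with respect to any subset, it then suffices to produce, for each admissible $m$, a form of dimension $m$ over $K$ that is anisotropic over $K$ but isotropic over $K_v$ for every divisorial valuation $v$ attached to a prime divisor of $X$.

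For $m = 2^r$ this is precisely the Auel--Suresh counterexample recalled in Section~\ref{main results}: there is a $2^r$-dimensional form $\phi$ over $K$ that is anisotropic over $K$ and isotropic over $K_v$ for all such $v$ (after enlarging $X$ if necessary). The hypothesis that $k$ is not the algebraic closure of a finite field enters exactly here, in invoking their construction.

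Now fix $m$ with $2^{r-1} < m < 2^r$ and set $c = 2^r - m$, so that $1 \le c \le 2^{r-1} - 1$; the inequality $c < 2^{r-1}$ is the hypothesis $m > 2^{r-1}$. One takes $q$ to be an $m$-dimensional subform of $\phi$. Being a subform of an anisotropic form it is automatically anisotropic over $K$, so everything reduces to choosing $q$ so that it remains isotropic over every $K_v$. The key local input is Springer's theorem together with a transcendence-degree count: a prime divisor of $X$ has function field $\kappa(v)$ finitely generated over $k$ of transcendence degree $r-1$, hence $u(\kappa(v)) \le 2^{r-1}$ by the Tsen--Lang theorem, so every form over $\kappa(v)$ of dimension exceeding $2^{r-1}$ is already isotropic. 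Feeding the isotropy of $\phi$ over $K_v$ and this bound into the Springer decomposition of $\phi \otimes K_v$ yields a lower bound on the local Witt index of $\phi$, valid at all but finitely many $v$ and large enough that, for $c$ small (say $c \le 2^{r-1} - 2$), every $m$-dimensional subform of $\phi$ is already isotropic over $K_v$; since deleting an entry of a form drops its Witt index by at most one, the subform $q$ survives. For the finitely many remaining $v$ one chooses the deleted entries by a weak approximation argument on the parameter space of $m$-dimensional subforms of $\phi$ over $K$, using that both isotropy and anisotropy of a form over $K_v$ are open conditions in the $v$-adic topology, so that the finitely many local constraints are met by a single subform defined over $K$.

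The delicate point, and the main obstacle, is this last step: forcing a \emph{single} $m$-dimensional subform of $\phi$, defined over $K$, to be isotropic at \emph{every} completion. This needs the Auel--Suresh form to be appreciably more isotropic over each $K_v$ than mere isotropy provides, together with control of how that isotropy erodes under deletion of entries; the values of $m$ closest to $2^{r-1}$ are the hardest. One cannot circumvent the difficulty by starting instead from a $2^r$-dimensional form hyperbolic over all $K_v$, because such a form would, through its discriminant and Clifford (or higher) invariant, produce a nonzero locally trivial cohomology class over $K$ and hence a three-dimensional counterexample over every rational surface function field, where none exists. Thus when $r = 2$ the value $m = 3$ is genuinely excluded, matching the validity of the Hasse principle for conics over such fields, while for $r \ge 3$ the smallest admissible dimension $m = 2^{r-1}+1$ must be handled by a separate argument that runs the Auel--Suresh construction at the appropriate level rather than by passing to a subform of $\phi$.
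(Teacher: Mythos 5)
There is a genuine gap: your strategy for the intermediate dimensions $2^{r-1}+2\leq m<2^r$ — passing to an $m$-dimensional subform of the Auel--Suresh form $\phi$ and forcing local isotropy — is not carried out, and the quantitative input you cite does not suffice. At a place $v\in V$ where all entries of $\phi$ are units, the bound $u(\kappa_v)\leq 2^{r-1}$ only forces the Witt index of $\phi_{K_v}$ to be at least $2^{r-2}$ (the residue form of dimension $2^r$ has anisotropic part of dimension at most $2^{r-1}$), so the ``delete $c$ entries, lose at most $c$ from the Witt index'' argument only covers $c\leq 2^{r-2}-1$, i.e.\ $m\geq 2^r-2^{r-2}+1$, not the whole range down to $2^{r-1}+2$. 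More seriously, the step you yourself call ``the delicate point, and the main obstacle'' — choosing a \emph{single} subform over $K$ that is isotropic at every completion, in particular at the finitely many bad places of $\phi$ and at the new bad places created by whatever entries weak approximation produces (where the subform may split under Springer into two pieces each of dimension $\leq 2^{r-1}$, so residue-dimension counting no longer applies) — is never resolved; and the hardest dimension $m=2^{r-1}+1$ is explicitly deferred to an unspecified ``separate argument.'' The opening reduction is also shaky and unnecessary: the finite support property does not obviously let you realize all of $V$ on one projective model, and no model is needed, since a counterexample with respect to \emph{all} discrete valuations on $K$ is automatically one with respect to any non-empty subset $V$.

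The paper avoids the subform problem entirely. For $2^{r-1}+2\leq m<2^r$ it argues by descent and contradiction (Proposition \ref{new SHP from old over A_i fields} and Corollary \ref{most dimensions}): if $m$-dimensional forms satisfied the Hasse principle with respect to $V$, then so would $(m+1)$-dimensional forms, via the classical Hasse--Minkowski-style weak approximation argument in which the finite support property of $V$ is used exactly to show that the truncated form $q_2$ is anisotropic at only finitely many $v\in V$ (since $\dim q_2>2^{r-1}\geq u(\kappa_v)$ at all places where its entries are units); this contradicts the $2^r$-dimensional Auel--Suresh counterexample, so counterexamples exist in every such dimension $m$, albeit non-constructively. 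For $m=2^{r-1}+1$ the paper gives an explicit construction, roughly the ``separate argument'' you gesture at: the form $\widetilde{q}_{r-1}=\langle\langle x_1,\ldots,x_{r-1};f_{r-1}\rangle\rangle\perp\langle -x_r^2-x_1\cdots x_{r-1}\rangle$ over $k(x_1,\ldots,x_r)$ is shown to be anisotropic (Lemma \ref{global anisotropy}) yet isotropic at every completion (Lemma \ref{local isotropy}), and is then transported to $K$ by Bogomolov's trick (Lemma \ref{reduction to rational function fields}). You would need to supply both of these missing arguments (or substitutes) for your proposal to become a proof.
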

Previously, Auel and Suresh \cite{auel-suresh} showed that if $K$ is any finitely generated field extension of transcendence degree $r \geq 2$ over an algebraically closed field $k$ of characteristic $\ne 2$ that is not the algebraic closure of a finite field, then there is a quadratic form over $K$ of dimension $2^r$ that violates the Hasse principle for isotropy with respect to the set of all discrete valuations on $K$. This field has $u$-invariant $2^r$, so this dimension is a natural first place to look for counterexamples to the Hasse principle for isotropy, as any quadratic form of dimension $> 2^r$ is isotropic over $K$, thus automatically satisfies the $\SHP$. Theorem \ref{main theorem} partially generalizes \cite[Theorem ~1]{auel-suresh} by finding counterexamples of dimension $< 2^r$, but not with respect to the set of all discrete valuations on $K$. An example of a set of discrete valuations on $K/k$ to which Theorem ~\ref{main theorem} applies is the set of discrete valuations on $K$ induced by prime divisors on a projective integral regular $k$-scheme with function field $K$. 

As one might expect, the smaller the set of discrete valuations is, the easier it is to violate these Hasse principles. However, over rational function fields, the $\WHP$ does in fact hold with respect to a small set of discrete valuations (relative to the set of all discrete valuations on the field; see Proposition ~\ref{WHP over one-variable rational function field}\ref{part a}). Despite that, there are numerous examples of quadratic forms over these fields that violate the $\SHP$ with respect to this same set of discrete valuations. Indeed, in Section ~\ref{transcendence degree 1}, we prove the following result (see Section \ref{notation} for notation and terminology):
\begin{theoremintro*}[\ref{SHP failure over A_i fields, weak assumptions}]
Let $\ell$ be a field of characteristic $\ne 2$. Assume $\ell \in \mathscr{A}_i(2)$ for some $i \geq 0$ and $u(\ell) = 2^i$. For any $r \geq 1$ let $L_r = \ell(x_1, \ldots, x_r)$, and for $r \geq 2$ let $V_r$ be the set of discrete valuations on $L_r$ that are trivial on $L_{r-1}$. Then for $r \geq 2$ and any integer $m \ne 3$ such that 
\[
	2^{i + r - 1} < m \leq 2^{i+r},
\]
there exists an $m$-dimensional quadratic form over $L_r$ that violates the Hasse principle for isotropy with respect to $V_r$.
\end{theoremintro*}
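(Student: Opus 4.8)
The plan is to reduce the problem to quadratic forms over the one-variable rational function field $F(x_r)$, where $F:=L_{r-1}$, and to exploit that under the hypothesis $F$ and all of its finite extensions have $u$-invariant at most $2^{i+r-1}$. First I would record the consequences of the assumption on $\ell$: since $\ell\in\mathscr{A}_i(2)$ and $u(\ell)=2^i$, the property $\mathscr{A}_i(2)$ is inherited by finitely generated and by finite field extensions, so that $u(F)=2^{i+r-1}=:n$, every finite extension $F'/F$ satisfies $u(F')\le n$, and $u(L_r)=u(F(x_r))=2n$. I would also note that $V_r$ is exactly the set of discrete valuations $v_P$ of $F(x_r)$ attached to the closed points $P$ of $\P^1_F$ (including the point at infinity), and that each residue field $\kappa(v_P)$ is a finite extension of $F$, so $u(\kappa(v_P))\le n<m$ for every $m$ in the range of the statement.

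With this in hand the two halves of the argument have quite different flavor. The local half --- that the form $q$ we build becomes isotropic over $L_{r,v}$ for all $v\in V_r$ --- is the easy one: by Milnor's residue sequence any $q$ over $F(x_r)$ ramifies on $\P^1_F$ at only finitely many closed points, and at every unramified $v_P$ one has, by Springer's theorem over the complete field $L_{r,v_P}\isom\kappa(P)((\pi))$, that $q$ is isotropic there as soon as its $m$-dimensional residue form is isotropic over $\kappa(P)$ --- which holds because $m>n\ge u(\kappa(P))$; in particular one arranges that $q$ is unramified at infinity (e.g.\ by making the degrees of the polynomials that occur even). So the only genuine constraint on the local side is at the finitely many ramified points, where one of the two Springer residues of $q$ must be made isotropic over the (larger) residue field --- which I would achieve by prescribing those residues, the cleanest arrangement being a single ramified point $P_0$ with $\kappa(P_0)=F(\sqrt g)$ chosen so that a factor of $q$ built from an anisotropic $\mathscr{A}_i$-witness $\phi_0$ over $\ell$ becomes hyperbolic over $\kappa(P_0)$.

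The global half --- that $q$ is anisotropic over $L_r$ --- is the main obstacle, and it is genuinely global: having arranged that $q$ is isotropic over every completion at a divisorial valuation trivial on $F$, no single valuation can witness its anisotropy. A naive construction of the shape ``(form over $F$) $\perp$ (polynomial)$\,\cdot\,$(form over $F$)'' does not survive this tension, since the condition that makes a residue of $q$ isotropic tends to make $q$ itself isotropic over $L_r$; so a more robust input is needed. Here I would control the anisotropy of $q$ by attaching to it a suitable division algebra, or a suitable nonzero symbol in the Galois cohomology of $L_r$ or of a quadratic extension of $L_r$, whose ramification on $\P^1_F$ is prescribed to be supported at $P_0$ alone: the local isotropy conditions then translate into the vanishing of this invariant over each $L_{r,v}$, which is forced by the prescribed ramification and the choice of $P_0$, while its non-vanishing over $L_r$ --- equivalently, the anisotropy of $q$ over $L_r$ --- is arranged using $\phi_0$ and the variable $x_r$. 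The top case $m=2^{i+r}$ is the $L_r$-analogue of the Auel--Suresh counterexample, and I would model the construction on theirs; the intermediate cases $2^{i+r-1}<m<2^{i+r}$ I would obtain by cutting $q$ down to carefully chosen $m$-dimensional subforms --- retaining the hyperbolic residue at $P_0$ and keeping enough dimension elsewhere that the local conditions persist --- and then invoking that anisotropy passes to subforms.

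The lower bound $m>2^{i+r-1}$ is exactly what makes ``enough dimension'' available in this last step (and what makes the subform still control the local behavior), and it is also where the exclusion of $m=3$ enters: $3$ lies in $(2^{i+r-1},2^{i+r}]$ only when $i+r=2$, i.e.\ (since $r\ge2$) when $i=0$, $r=2$, $n=2$, and there a direct low-dimensional check shows there is no $3$-dimensional counterexample over $\ell(x_1,x_2)$ --- for instance $\quadform{1,\,-x_1,\,x_1-x_2^2}$, the natural $3$-dimensional candidate the method would produce, is already isotropic, vanishing at $(x_2,1,1)$ --- so one is forced to take $m=4$ in that case, while for every other $(i,r)$ the whole range lies above $3$ and the construction applies uniformly.
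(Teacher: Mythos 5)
Your reduction to the top variable and your treatment of the local half (residue fields of $v\in V_r$ are finite extensions of $L_{r-1}$, hence have $u$-invariant $\le 2^{i+r-1}<m$, so Springer's Theorem gives isotropy at every place where the form is "unramified", leaving only finitely many bad places to handle by hand) match the paper's Lemma on local isotropy. But the heart of the theorem --- exhibiting a form that is anisotropic over $L_r$ while locally isotropic at \emph{all} $v\in V_r$, and doing so in every dimension $2^{i+r-1}<m\le 2^{i+r}$, $m\ne 3$ --- is left as an appeal to an unspecified Auel--Suresh-style invariant ("attach a suitable division algebra, or a suitable nonzero symbol\dots"), and that is a genuine gap: no construction is given, and it is not checked that such an invariant both survives over $L_r$ and dies at every $v\in V_r$ for an arbitrary $\ell\in\mathscr{A}_i(2)$ with $u(\ell)=2^i$ (e.g.\ $\ell=\F_p$), nor that the invariant still controls anisotropy after you cut down to the intermediate-dimensional subforms. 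Moreover, the premise that pushes you toward this machinery --- "having arranged that $q$ is isotropic over every completion at a divisorial valuation trivial on $F$, no single valuation can witness its anisotropy" --- is false: valuations of $L_r$ that are \emph{not} trivial on $L_{r-1}$ (e.g.\ the $x_{r-1}$-adic one) lie outside $V_r$ and are perfectly available to certify anisotropy, precisely because $V_r$ is a small set.

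That is exactly what the paper does, entirely with Springer's Theorem and no cohomological invariants. After reducing to $r=2$ over $\ell'=\ell(x_1,\ldots,x_{r-2})$, it takes an anisotropic $2^{i'}$-dimensional form $q$ over the base representing $1$ and writes down the explicit form
\[
	\varphi \;=\; \langle x_2+1,\,-x_2-x_1\rangle \,\perp\, \langle 1,-x_2\rangle\otimes q' \,\perp\, x_1\cdot\bigl(\langle\langle x_2\rangle\rangle\otimes q\bigr),
\]
whose anisotropy is verified over $\ell'(x_2)((x_1))$ --- i.e.\ at the $x_1$-adic valuation, which is not in $V_2$ --- by two residue computations, while local isotropy at every $v\in V_2$ follows from a short case analysis: the degree valuation and the places $x_2$, $x_2+1$, $x_2+x_1$ are handled by explicit isotropic residues coming from the $4$-dimensional piece $\langle x_2+1,-x_2-x_1,x_1,x_1x_2\rangle$, and all other places by the $u$-invariant bound, exactly as in your sketch. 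The intermediate dimensions then come for free: \emph{any} subform of $\varphi$ of dimension $>2^{i'+1}$ containing that $4$-dimensional piece is still anisotropic (it sits inside $\varphi$) and still locally isotropic (the case analysis only uses that piece plus the dimension count), which is also where the exclusion $m\ne 3$ enters --- one needs room for the $4$-dimensional piece --- rather than via the ad hoc check you propose. So the architecture of your local argument is right, but the global half as you describe it would not assemble into a proof without importing and reproving a substantial construction that the theorem does not actually need.
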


\section{Notation and Preliminaries}
\label{notation}
All of the fields we consider will have characteristic different from 2, and all quadratic forms (occasionally referred to just as forms) considered will be nondegenerate (or \textit{regular}). Our notation and terminology follows \cite{lam}, and we assume familiarity with basic notions of quadratic form theory (see \cite[Chapter I]{lam}). 

Let $q$ be an $n$-dimensional quadratic form over a field $k$. Since $\Char k \ne 2$, we can diagonalize $q$ over $k$, and write $q = \langle a_1, \ldots, a_n \rangle$, with each $a_i \in k^{\times}$. The set of elements of $k^{\times}$ represented by $q$ over $k$ will be denoted by $D_k(q)$. If $K/k$ is any field extension, $q_K$ will denote the quadratic form ~$q$ considered as a quadratic form over $K$. We write $q_1 \simeq q_2$ to denote that the quadratic forms $q_1$ and $q_2$ over $k$ are isometric, and we say that $q_1$ and $q_2$ are \textit{similar} if there exists some $a \in k^{\times}$ such that $q_1 \simeq a \cdot q_2 := \langle a \rangle \otimes q_2$. Given forms $q, \varphi$ over $k$, we say that $q$ is a \textit{subform} of $\varphi$, denoted $q \subseteq \varphi$, if there exists some quadratic form $\psi$ over $k$ such that $\varphi \simeq q \perp \psi$.

The \textit{hyperbolic plane} $\langle 1, -1 \rangle$ over $k$ will be denoted by $\mathbb{H}$, and we say that an even-dimensional quadratic form $q$ over $k$ is \textit{hyperbolic} if $q \simeq m\mathbb{H}$ for some positive integer $m$, where $m\mathbb{H}$ denotes the orthogonal sum of $m$ copies of $\mathbb{H}$. 

Given any $a_1, \ldots, a_n \in k^{\times}$, we let $\langle \langle a_1, \ldots, a_n \rangle \rangle$ denote the \textit{$n$-fold Pfister form}
\[
	\langle \langle a_1, \ldots, a_n \rangle \rangle = \langle 1, a_1 \rangle \otimes \cdots \otimes \langle 1, a_n \rangle.
\]
A Pfister form is isotropic if and only if it is hyperbolic \cite[Theorem~ X.1.7]{lam}. If $\varphi$ is a Pfister form over $k$, we can write $\varphi \simeq \langle 1 \rangle \perp \varphi'$, and $\varphi'$ is called the \textit{pure subform} of $\varphi$. We will use this notation for any quadratic form that represents 1, i.e., if $q \simeq \langle 1, a_2, \ldots, a_n \rangle$, then we let $q' := \langle a_2, \ldots, a_n \rangle$, which is well-defined up to isometry by Witt Cancellation \cite[Theorem ~I.4.2]{lam}.

A certain measure of the complexity of quadratic forms over a field $k$ is the \textit{$u$-invariant of $k$}, denoted by $u(k)$, which is defined as the maximal dimension of an anisotropic quadratic form over ~$k$ \cite[Definition ~XI.6.1]{lam}. If no such maximum exists, then $u(k) = \infty$ (e.g., $u(\R) = u(\Q) = \infty$). For fields considered in this paper, the $u$-invariant is known.

First, there are $C_i$ fields (see, e.g., \cite[Definition, ~\textsection 2]{pfister}). If $k$ is a $C_i$ field for some $i \geq 0$, then $u(k) \leq 2^i$. By \cite[Propositions ~2, 3]{pfister}, if $k$ is a $C_i$ field for some $i \geq 0$, and $K$ is a finitely generated field extension of transcendence degree $r$ over $k$, then $K$ is a $C_{i + r}$ field, thus $u(K) \leq 2^{i + r}$. 

More generally, there are non-$C_i$ fields for which the $u$-invariant is known, like $\Q_p$. For any prime ~$p$, $u(\Q_p) = 4$, but $\Q_p$ is not a $C_2$ field. However, $\Q_p$ does satisfy property $\mathscr{A}_2(2)$ considered in \cite{leep}, which we now recall. For any integer $i \geq 0$, a field $\ell$ of characteristic $\ne 2$ satisfies property $\mathscr{A}_i(2)$ (written $\ell \in \mathscr{A}_i(2)$) if every system of $s$ quadratic forms over $\ell$ in $n > s \cdot 2^i$ common variables has a nontrivial simultaneous zero in an extension field of $\ell$ of odd degree.

By \cite[Proposition ~1]{pfister}, if $k$ is a $C_i$ field, then $k \in \mathscr{A}_i(2)$. According to \cite[Proposition ~2.2]{leep}, if $\ell \in \mathscr{A}_i(2)$ for some $i \geq 0$, then $u(\ell) \leq 2^i$. Moreover, much like $C_i$ fields, if $\ell \in \mathscr{A}_i(2)$ for some $i \geq 0$, and $L$ is a finitely generated field extension of transcendence degree $r$ over $\ell$, then by \cite[Theorems ~2.3, 2.5]{leep}, $L \in \mathscr{A}_{i + r}(2)$. For such a field $L/\ell$, we conclude that $u(L) \leq 2^{i + r}$.

When discussing the $\SHP$ over a field $k$, it is beneficial to know how quadratic forms behave over the residue fields of the completions of $k$. For a field $k$ equipped with a discrete valuation $v$, we consider the valuation ring $\mathcal{O}_v$ of $v$ with maximal ideal $\mathfrak{m}_v$. We let $\kappa_v = \mathcal{O}_v / \mathfrak{m}_v$ be the residue field, and let $k_v$ be the $v$-adic completion of $k$. For a quadratic form $q$ defined over $k$, we write $q_v$ for the quadratic form $q_{k_v}$. Over the field $K = k(t)$, each monic irreducible polynomial $\pi \in k[t]$ induces a discrete valuation $v_{\pi}$ on $K$, and the completion and residue fields corresponding to $v_{\pi}$ will be denoted by $K_{\pi}$ and $\kappa_{\pi} \cong k[t] / (\pi)$, respectively.

Throughout the manuscript, we will repeatedly use Springer's Theorem on quadratic forms over complete discretely valued fields \cite[Proposition ~VI.1.9]{lam}, which we will refer to as Springer's Theorem. Springer's Theorem states that, over a field $K$ that is complete with respect to a discrete valuation $v$, with uniformizer $\pi$ and residue field $\kappa_v$ such that $\Char \kappa_v \ne 2$, a quadratic form $q \simeq q_1 \perp \pi \cdot q_2$ is anisotropic over $K$ if and only if both residue forms $\overline{q}_1$ and $\overline{q}_2$ are anisotropic over ~$\kappa_v$. 

Lastly, we make a small observation about Hasse principles over a field $k$ with respect to different sets of discrete valuations on $k$. Let $V$ and $W$ be two non-empty sets of non-trivial discrete valuations on $k$, and suppose that $V \subseteq W$. If quadratic forms over $k$ satisfy the Hasse principle (for isotropy/isometry) with respect to $V$, then they also satisfy the Hasse principle with respect to $W$. Equivalently, if a quadratic form $q$ over $k$ violates the Hasse principle with respect to $W$, then $q$ also violates the Hasse principle with respect to $V$.

\section{A small set of discrete valuations}
\label{transcendence degree 1}

Let us first focus on rational function fields in one variable over a field $k$ of characteristic $\ne 2$. Let $K = k(t)$, and let $\mathscr{P}$ be the set of monic irreducible polynomials in $k[t]$. Then if $V_{K/k}$ is the set of all discrete valuations on $K$ that are trivial on $k$, by \cite[Theorem ~2.1.4]{ep}, we know
\[
	V_{K/k} = V_{\mathscr{P}} \cup \left\{v_{\infty}\right\},
\]
where $V_{\mathscr{P}} = \left\{v_{\pi} \mid \pi \in \mathscr{P}\right\}$ and $v_{\infty}$ is the degree valuation with respect to $t$. Relative to the set of all discrete valuations on $K$, the set $V_{K/k}$ is small, but provides enough local data for the Hasse principle for isometry to hold, and for certain quadratic forms over $K$ to satisfy the Hasse principle for isotropy (see Proposition \ref{WHP over one-variable rational function field}). However, the main result of this section (Theorem \ref{SHP failure over A_i fields, weak assumptions}) shows that there are numerous counterexamples over $K$ to the Hasse principle for isotropy with respect to $V_{K/k}$ for certain ground fields $k$. 

Before proceeding, we recall that a quadratic form $q$ over a field $k$ is a \textit{Pfister neighbor} if ~$q$ is similar to a subform of a Pfister form $\varphi$ over $k$ with $\dim \varphi < 2 \dim q$ (see, e.g., \cite[Definition ~X.4.16]{lam}). In this situation, $\varphi$ is unique up to isometry by \cite[Proposition ~X.4.17]{lam}, and is called the \textit{Pfister form associated to ~$q$}. 

The assertions in the following proposition are well-known to experts, but the proofs do not seem to be written explicitly in the literature, so we include the ideas of the proofs for the sake of completeness.
\begin{prop}
\label{WHP over one-variable rational function field}
Let $k$ be any field of characteristic $\ne 2$, let $K = k(t)$, and let $V_{K/k}$ be the set of discrete valuations on $K$ that are trivial on $k$. Then 
\begin{enumerate}
	\item the Hasse principle for isometry holds over $K$ with respect to $V_{K/k}$, \label{part a}
	
	\item Pfister neighbors over $K$ satisfy the Hasse principle for isotropy with respect to $V_{K/k}$, \label{part b}
	
	\item Pfister forms over $K$, regular quadratic forms of dimension 2 or 3 over $K$, and regular four-dimensional quadratic forms over $K$ with trivial determinant satisfy the Hasse principle for isotropy with respect to $V_{K/k}$. \label{part c}
\end{enumerate}
\end{prop}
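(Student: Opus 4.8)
The plan is to prove (a) directly, obtain (b) from (a) via the basic theory of Pfister neighbors, and obtain (c) from (b) by exhibiting every form in the three listed families as a Pfister neighbor. For (a), by the reformulation recalled in the introduction it suffices to show that an even-dimensional form $q$ over $K$ which is hyperbolic over $K_v$ for all $v \in V_{K/k} = V_{\mathscr{P}} \cup \{v_\infty\}$ is hyperbolic over $K$, and I would argue in the Witt ring $W(K)$. For each monic irreducible $\pi \in k[t]$, the second residue homomorphism $\partial_\pi \colon W(K) \to W(\kappa_\pi)$ factors through $W(K_\pi)$, so hyperbolicity of $q$ over $K_\pi$ gives $\partial_\pi([q]) = 0$. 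Letting $\pi$ range over $\mathscr{P}$, the exactness of Milnor's residue sequence
\[
	0 \longrightarrow W(k) \longrightarrow W(K) \xrightarrow{\ (\partial_\pi)_{\pi \in \mathscr{P}}\ } \bigoplus_{\pi \in \mathscr{P}} W(\kappa_\pi) \longrightarrow 0
\]
provides a form $q_0$ over $k$ with $[q] = [(q_0)_K]$ in $W(K)$. The remaining place $v_\infty$ is then used to pin this down: its completion $K_\infty = k(\!(1/t)\!)$ has residue field $k$, so $W(k) \hookrightarrow W(K_\infty)$, and from $0 = [q_{K_\infty}] = [(q_0)_{K_\infty}]$ we conclude $[q_0] = 0$ in $W(k)$, hence $[q] = 0$ in $W(K)$; since $q$ is even-dimensional, $q$ is hyperbolic over $K$.

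For (b), I would use the standard fact that, for isotropy purposes, a Pfister neighbor behaves like its associated Pfister form: if $q$ is a Pfister neighbor over a field $F$ with associated Pfister form $\varphi$, say $c q \subseteq \varphi$ with $c \in F^\times$ and $\dim \varphi < 2 \dim q$, then over any extension $E/F$ the conditions that $q_E$ is isotropic, that $\varphi_E$ is isotropic, and that $\varphi_E$ is hyperbolic are all equivalent. Indeed, if $q_E$ is isotropic then so is the over-form $\varphi_E \supseteq (cq)_E$; for Pfister forms isotropy is the same as hyperbolicity \cite[Theorem~X.1.7]{lam}; and if $\varphi_E$ is hyperbolic then its subform $(cq)_E$, of dimension exceeding half of $\dim \varphi_E$, is isotropic, so $q_E$ is isotropic. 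Since the relation $c q \subseteq \varphi$ is preserved under base change, if $q$ is isotropic over $K_v$ for every $v \in V_{K/k}$ then $\varphi$ is hyperbolic over $K_v$ for every such $v$. We may assume $\dim q \geq 2$ (a one-dimensional form is anisotropic over every field, so satisfies the Hasse principle vacuously); then $\varphi$ is a Pfister form of fold $\geq 1$, hence even-dimensional, so part (a) shows $\varphi$ is hyperbolic — in particular isotropic — over $K$, whence $q$ is isotropic over $K$.

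For (c), it remains to realize each listed form as a Pfister neighbor, after which (b) applies since the dimension inequality $\dim \varphi < 2 \dim q$ holds in every case. An $n$-fold Pfister form is a Pfister neighbor of itself. A two-dimensional form satisfies $\langle a, b \rangle = a \langle \langle ab \rangle \rangle$, so it is similar to a $1$-fold Pfister form. A three-dimensional form $\langle a, b, c \rangle$ scales to $\langle 1, ab, ac \rangle$, which is a subform of the $2$-fold Pfister form $\langle \langle ab, ac \rangle \rangle = \langle 1, ab, ac, bc \rangle$. A four-dimensional form $\langle a, b, c, d \rangle$ of trivial determinant satisfies $ad \equiv bc \equiv (ab)(ac) \bmod (K^\times)^2$, so it scales to $\langle 1, ab, ac, (ab)(ac) \rangle = \langle \langle ab, ac \rangle \rangle$ and is similar to a $2$-fold Pfister form.

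I expect the only genuine obstacle to be part (a): it is the single place where a nontrivial input about $k(t)$ enters, namely the exactness of Milnor's residue sequence, and the delicate point is that the finite places $V_{\mathscr{P}}$ determine the Witt class of $q$ only up to a class coming from $W(k)$; the one extra place $v_\infty$ — through the injectivity $W(k) \hookrightarrow W(K_\infty)$, which is a consequence of Springer's Theorem over $k(\!(1/t)\!)$ — is exactly what is needed to force that class to vanish. Once (a) is available, parts (b) and (c) are formal.
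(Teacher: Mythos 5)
Your proposal is correct and follows essentially the same route as the paper: Milnor's exact sequence together with the Springer-type injectivity of $W(k)$ into the Witt ring of a complete discretely valued field for part (a), the equivalence of isotropy between a Pfister neighbor and its associated Pfister form (plus isotropic $=$ hyperbolic for Pfister forms) for part (b), and realizing the listed forms as Pfister neighbors for part (c). The only, harmless, difference is that you use $v_{\infty}$ to kill the constant class $q_0 \in W(k)$, whereas the paper works only with $V_{\mathscr{P}}$ (using the completion $k((t))$ at the place $t$ itself for the injectivity step), which gives the marginally stronger conclusion that the finite places alone suffice.
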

To prove Proposition \ref{WHP over one-variable rational function field}\ref{part a}, it suffices to consider $V_{\mathscr{P}} \subset V_{K/k}$, and the statement follows from using the Milnor exact sequence on Witt groups \cite[Theorem ~5.3]{milnor} together with the injectivity of the map $W(k) \to W(k((t)))$ on Witt groups (see, e.g., \cite[Exercise ~19.15]{ekm}).

Proposition \ref{WHP over one-variable rational function field}\ref{part b} then follows from Proposition \ref{WHP over one-variable rational function field}\ref{part a} by using the fact that a Pfister neighbor is isotropic if and only if its associated Pfister form is isotropic (see, e.g., \cite[Proof of Proposition ~X.4.17]{lam}), and the fact that a Pfister form is isotropic if and only if it is hyperbolic \cite[Theorem ~X.1.7]{lam}. Finally, Proposition \ref{WHP over one-variable rational function field}\ref{part c} is a particular case of Proposition \ref{WHP over one-variable rational function field}\ref{part b}, since all the quadratic forms considered in Proposition \ref{WHP over one-variable rational function field}\ref{part c} are Pfister neighbors by \cite[Examples X.4.18]{lam}.

\begin{remarks}
 \label{WHP remark} 
\begin{enumerate}
	\item For any field $k$ of characteristic $\ne 2$ equipped with a non-empty set $V$ of non-trivial discrete valuations with respect to which the $\WHP$ holds, the same ideas as above show that Pfister neighbors over $k$ satisfy the Hasse principle for isotropy with respect to $V$.
	
	\item For any $r \geq 1$, let $K_r = k(x_1, \ldots, x_r)$ be a purely transcendental field extension of transcendence degree $r$ over a field $k$ of characteristic $\ne 2$. Let $V_r$ be the set of discrete valuations on $K_r$ that are trivial on $K_{r-1}$ (here taking $K_0 = k$). Then $K_r \cong K_{r-1}(x_r)$, so with respect to $V_r$, the Hasse principle for isometry is satisfied, and Pfister neighbors over $K_r$ satisfy the Hasse principle for isotropy.
\end{enumerate}
\end{remarks}

The following result shows that, even when the $\WHP$ holds over purely transcendental field extensions of fields $\ell \in \mathscr{A}_i(2)$ for some $i$ (defined in Section \ref{notation}), the Hasse principle for isotropy can fail in several dimensions. 

\begin{theorem}
\label{SHP failure over A_i fields, weak assumptions}
Let $\ell$ be a field of characteristic $\ne 2$. Assume $\ell \in \mathscr{A}_i(2)$ for some $i \geq 0$ and $u\left(\ell\right) = 2^i$. For any $r \geq 1$ let $L_r = \ell(x_1, \ldots, x_r)$, and for $r \geq 2$ let $V_r$ be the set of discrete valuations on $L_r$ that are trivial on $L_{r-1}$. Then for $r \geq 2$ and any integer $m \ne 3$ such that 
\[
	2^{i + r - 1} < m \leq 2^{i+r},
\]
there exists an $m$-dimensional quadratic form over $L_r$ that violates the Hasse principle for isotropy with respect to $V_r$.
\end{theorem}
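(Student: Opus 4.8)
\noindent\emph{Proof proposal.}
First I would reduce to a one-variable problem. Put $k:=L_{r-1}$ and $n:=i+r-1$; as $r\ge 2$ we have $n\ge 1$, and by the facts on $\mathscr{A}_i(2)$-fields recalled in Section~\ref{notation}, $k\in\mathscr{A}_n(2)$. Fix an anisotropic form $\rho$ over $\ell$ with $\dim\rho=2^i$ (it exists because $u(\ell)=2^i$) and set $\Phi:=\langle\langle x_1,\ldots,x_{r-1}\rangle\rangle\otimes\rho$. Applying Springer's Theorem at the valuations $v_{x_j}$ one by one shows that $\Phi$ is anisotropic over $k$, so $u(k)=\dim\Phi=2^n$. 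Since $L_r=k(x_r)$ we have $V_r=V_{L_r/k}$, and by the description of $V_{L_r/k}$ at the start of Section~\ref{transcendence degree 1} every $v\in V_r$ is either the degree valuation, with residue field $k$, or a valuation $v_\pi$ with $\pi\in k[x_r]$ monic irreducible and residue field $\kappa_\pi=k[x_r]/(\pi)$, finite over $k$. In every case $\kappa_v$ is a finitely generated extension of $\ell$ of transcendence degree $r-1$, hence $\kappa_v\in\mathscr{A}_n(2)$ and $u(\kappa_v)\le 2^n$.

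This controls the local side. If $q$ is any $m$-dimensional form over $L_r$ with $m>2^n$, then at each $v\in V_r$ of good reduction the residue form has dimension $m>2^n\ge u(\kappa_v)$, so it is isotropic and $q$ is isotropic over $(L_r)_v$; hence $q$ is locally isotropic throughout $V_r$ as soon as it is isotropic over $(L_r)_v$ at the finitely many $v$ of bad reduction. One must, though, choose the bad primes carefully. If $q=\phi\perp g(x_r)\psi$ with $\phi,\psi$ defined over $k$ and $g\in k[x_r]$, and $\pi$ is a linear factor of $g$ of odd multiplicity, then the unit residue of $q$ at $v_\pi$ is a form over $\kappa_\pi=k$ and a subform of $q$ over $k\subseteq L_r$; if it were isotropic it would make $q$ isotropic over $L_r$. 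So $g$ must have only non-linear irreducible factors of odd multiplicity, and at each such $\pi$ a residue form of $q$ must become isotropic over the \emph{proper} finite extension $\kappa_\pi$ of $k$, where isotropy does not descend to $L_r$.

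Guided by this, for each admissible $m$ I would look for $q$ of the shape
\[
 q\;=\;\phi\,\perp\,(x_r^2-b)\,\psi ,
\]
with $\phi$ an anisotropic subform of $\Phi$ over $k$ of dimension $\min(m,2^n)$, $\psi$ an anisotropic form over $k$ with $\dim\phi+\dim\psi=m$, and $b\in k^\times$ a non-square taken to be (up to squares) a product of two diagonal entries of $\phi$, so that $\phi$ has a subform similar to $\langle 1,-b\rangle$ and becomes isotropic over $\kappa_\pi=k(\sqrt b)$ for the unique bad prime $\pi=x_r^2-b$, which is quadratic, hence non-linear. For this shape $v_\infty$ has good reduction (all entries have even value there) with residue form of dimension $m>2^n=u(k)$, hence isotropic; every $v_\pi$ with $\pi\ne x_r^2-b$ has good reduction; and at $v_{x_r^2-b}$ the unit residue $\phi_{k(\sqrt b)}$ is isotropic. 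So $q$ is isotropic over $(L_r)_v$ for all $v\in V_r$. (When $m=2^n+1$ one may take $\psi=\langle c\rangle$ and everything is explicit.) The range $2^n<m$ rules out $m=1,2$, and $m=3$ lies in $(2^n,2^{n+1}]$ only when $n=1$, i.e. $i=0,\ r=2$; it is excluded because every three-dimensional form over $L_r=L_{r-1}(x_r)$ is a Pfister neighbor and so satisfies the $\SHP$ with respect to $V_r$ by Proposition~\ref{WHP over one-variable rational function field}\ref{part b}.

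The crux, and the step I expect to be hardest, is to prove $q$ anisotropic over $L_r$. By construction $q$ is isotropic over every $(L_r)_v$ with $v\in V_r$, so no Springer argument over a place of $V_r$ can help, and specializing the $x_j$ only produces $m$-dimensional forms over fields of $u$-invariant $\le 2^n<m$, which are always isotropic; the argument must be direct. For the shape above, an argument of Cassels--Pfister type — clear denominators, compare degrees and leading coefficients, and descend — reduces an isotropy of $q$ over $k(x_r)$ to the existence of $c\in k^\times$ with $c\langle 1,-b\rangle\subseteq\phi$ and $-c\in D_k(\psi)$; thus $q$ is anisotropic over $L_r$ precisely when $\phi$ and $\psi$ are not linked through $\langle 1,-b\rangle$ in this sense while $\phi_{k(\sqrt b)}$ remains isotropic. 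The remaining task is to arrange this for every $m$ in the interval: one must choose $\rho$, the subform $\phi\subseteq\Phi$, the form $\psi$, and the non-square $b$ — perhaps enlarging $b$ to an irreducible polynomial of higher even degree, or allowing a second non-linear bad prime, or treating the top dimension $m=2^{n+1}$ separately (which, when $\ell$ is not the algebraic closure of a finite field, is already covered by the $2^r$-dimensional form of Auel--Suresh) — so that the square classes in $D_k(\psi)$ avoid the coset of $k^\times/k^{\times 2}$ cut out by the binary subforms of $\phi$ similar to $\langle 1,-b\rangle$. Pinning down this represented-value combinatorics over $k=L_{r-1}$ is where the real work lies.
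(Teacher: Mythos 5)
Your reduction to a one-variable problem over $k=L_{r-1}$ and your local analysis are sound in outline, but the proposal has a genuine gap at its central step: the global anisotropy of the proposed forms over $L_r$ is never established. You assert a Cassels--Pfister-type criterion --- that $\phi\perp(x_r^2-b)\psi$ is isotropic over $k(x_r)$ if and only if some $c\in k^\times$ satisfies $c\langle 1,-b\rangle\subseteq\phi$ and $-c\in D_k(\psi)$ --- without proof, and even granting it you do not produce choices of $\rho$, $\phi$, $\psi$, $b$ realizing the required ``represented-value combinatorics'' over $k=L_{r-1}$; you explicitly defer this as where the real work lies. Note the tension built into your shape: local isotropy at the unique bad prime $x_r^2-b$ forces the anisotropic form $\phi$ to contain some $c\langle 1,-b\rangle$ (this is exactly the criterion for $\phi$ to become isotropic over $k(\sqrt b)$), so anisotropy of $q$ hinges on showing that no such $c$ has $-c$ represented by $\psi$ --- a nontrivial statement about value sets over the rational function field $L_{r-1}$ that must be arranged for every $m$ in the range, and under hypotheses ($\ell\in\mathscr{A}_i(2)$, $u(\ell)=2^i$) that include fields such as $\F_p$ and $\Q_p$, so your suggested fallback to the Auel--Suresh form for the top dimension is not available either (their construction requires an algebraically closed base field).

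The paper's proof avoids this combinatorics entirely by making the counterexample genuinely two-variable. After reducing to $r=2$ over $\ell(x_1,\ldots,x_{r-2})$ (rather than to one variable over $L_{r-1}$), it takes $\varphi=\langle x_2+1,-x_2-x_1\rangle\perp\langle 1,-x_2\rangle\otimes q'\perp x_1\cdot(\langle\langle x_2\rangle\rangle\otimes q)$ and proves anisotropy (Lemma \ref{anisotropy in highest dimension}) by applying Springer's Theorem at the $x_1$-adic valuation of $k(x_2)((x_1))$ --- a valuation that is \emph{not} in $V_2$ because it is nontrivial on $L_1$ --- followed by a second residue argument at $x_2$. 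Your one-variable shape, with both $\phi$ and $\psi$ constant in $x_r$ and the form locally isotropic at every place of $V_r$, forecloses exactly this kind of residue argument, which is what pushes you into the unproved value-set analysis. Lower-dimensional counterexamples are then obtained in the paper for free as subforms of $\varphi$ containing $\langle x_2+1,-x_2-x_1,x_1,x_1x_2\rangle$ (Lemma \ref{local isotropy of subforms}), rather than by a separate construction for each $m$. Until you either prove your isotropy criterion and carry out the construction of $\phi$, $\psi$, $b$, or replace this step by a residue argument at a valuation outside $V_r$ as in the paper, the proposal does not yield the theorem.
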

The proof is constructive and uses several lemmas which leverage both Springer's Theorem and the explicit description of $V_{K/k}$ for $K = k(t)$. 

\begin{lemma}
\label{anisotropic tensor product}
Let $q$ be an anisotropic quadratic form over a field $k$ of characteristic $\ne 2$. Then for any $r \geq 1$, the quadratic form $\langle \langle x_1, \ldots, x_r \rangle \rangle \otimes q$ is anisotropic over $k(x_1, \ldots, x_r)$. In particular, the Pfister form $\langle \langle x_1, \ldots, x_r \rangle \rangle$ is anisotropic over $k(x_1, \ldots, x_r)$.
\end{lemma}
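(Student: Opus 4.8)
The plan is to induct on $r$, reducing the general statement to the one-variable case, which is an immediate application of Springer's Theorem.

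First I would treat the base case $r = 1$. Since $k(x_1)$ embeds into its $x_1$-adic completion $k((x_1))$, it suffices to show that $\langle \langle x_1 \rangle \rangle \otimes q = q \perp x_1 \cdot q$ is anisotropic over $k((x_1))$. Writing $q \simeq \langle a_1, \ldots, a_n \rangle$ with each $a_j \in k^{\times}$, this form has the shape $q_1 \perp x_1 \cdot q_2$ appearing in Springer's Theorem, with $q_1 = q_2 = q$, uniformizer $x_1$, and residue field $k$ (recall $\Char k \ne 2$); the two residue forms are $\overline{q}_1 = \overline{q}_2 = q$ over $k$. Since $q$ is anisotropic over $k$ by hypothesis, Springer's Theorem gives that $q \perp x_1 \cdot q$ is anisotropic over $k((x_1))$, hence over $k(x_1)$.

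For the inductive step, suppose the result holds for $r - 1 \geq 1$, and set $k' = k(x_1, \ldots, x_{r-1})$. By the inductive hypothesis, $\varphi := \langle \langle x_1, \ldots, x_{r-1} \rangle \rangle \otimes q$ is anisotropic over $k'$. Since $k(x_1, \ldots, x_r) = k'(x_r)$ and $\langle \langle x_1, \ldots, x_r \rangle \rangle \otimes q \simeq \langle \langle x_r \rangle \rangle \otimes \varphi$, applying the already-established $r = 1$ case to the anisotropic form $\varphi$ over $k'$ shows that $\langle \langle x_r \rangle \rangle \otimes \varphi$ is anisotropic over $k'(x_r) = k(x_1, \ldots, x_r)$, completing the induction. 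The final assertion follows by taking $q = \langle 1 \rangle$, which is anisotropic over any field of characteristic $\ne 2$.

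No step here is a serious obstacle; the only thing requiring care is checking that $q \perp x_1 \cdot q$ (and more generally $\varphi \perp x_r \cdot \varphi$) is presented in the normal form demanded by Springer's Theorem and that the resulting residue forms are exactly the original anisotropic form, after which both the base case and the induction are routine.
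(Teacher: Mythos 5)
Your proposal is correct and follows essentially the same argument as the paper: the base case is handled by passing to $k((x_1))$ and applying Springer's Theorem to $q \perp x_1 \cdot q$ with both residue forms equal to $q$, and the inductive step regroups $\langle \langle x_1, \ldots, x_r \rangle \rangle \otimes q$ as $\langle \langle x_r \rangle \rangle \otimes \varphi$ and reapplies the one-variable case over $k(x_1, \ldots, x_{r-1})$. The only difference is cosmetic indexing of the induction, so no further comment is needed.
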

\begin{proof} 
The second statement of the lemma follows from the first by taking $q = \langle 1 \rangle$, so it suffices to prove the first statement, which we do by inducting on $r \geq 1$.

First, suppose $r = 1$. Then by working over $k((x_1))$ and writing 
\[
	\langle \langle x_1 \rangle \rangle \otimes q = q \perp x_1 \cdot q,
\]
we see that both the first and second residue forms are equal to $q$, which is anisotropic over the residue field $k$ by assumption. So by Springer's Theorem, $\langle \langle x_1 \rangle \rangle \otimes q$ is anisotropic over $k((x_1))$, which contains $k(x_1)$, thus proving the base case.

Now suppose that for some $r \geq 1$, the form $\langle \langle x_1, \ldots, x_r \rangle \rangle \otimes q$ is anisotropic over $k(x_1, \ldots, x_r)$. Over $k(x_1, \ldots, x_r, x_{r+1})$, we can write $\langle \langle x_1, \ldots, x_{r+1} \rangle \rangle \otimes q$ as
\begin{align*}
	\langle \langle x_1, \ldots, x_r, x_{r+1} \rangle \rangle \otimes q &= \left(\langle \langle x_{r+1} \rangle \rangle \otimes \langle \langle x_1, \ldots, x_r \rangle \rangle\right) \otimes q = \langle \langle x_{r+1} \rangle \rangle \otimes \left(\langle \langle x_1, \ldots, x_r \rangle \rangle \otimes q \right).
\end{align*}
By the induction hypothesis, $\langle \langle x_1, \ldots, x_r \rangle \rangle \otimes q$ is anisotropic over $k(x_1, \ldots, x_r)$, so by the base case (with $k(x_1, \ldots, x_r)$ replacing $k$ and $x_{r+1}$ replacing $x_1$), $\langle \langle x_{r+1} \rangle \rangle \otimes \left(\langle \langle x_1, \ldots, x_r \rangle \rangle \otimes q\right)$ is anisotropic over $k(x_1, \ldots, x_r)(x_{r+1}) \cong k(x_1, \ldots, x_r, x_{r+1})$, completing the proof of the lemma by induction.
\end{proof}

Now, recall from Section \ref{notation} that for a quadratic form $q$ over $k$ that represents 1, $q'$ denotes the quadratic form over $k$ such that $q \simeq \langle 1 \rangle \perp q'$.
\begin{lemma}
\label{anisotropy in highest dimension}
Let $k$ be any field of characteristic $\ne 2$, and let $q$ be an anisotropic quadratic form over $k$ that represents 1. Then the quadratic form
\[
	\varphi = \langle x_2 + 1, -x_2 - x_1 \rangle \perp \langle 1, -x_2 \rangle \otimes q'  \perp x_1 \cdot \left(\langle \langle x_2 \rangle \rangle \otimes q \right)
\]
is anisotropic over $k(x_1, x_2)$.
\end{lemma}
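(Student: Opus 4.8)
The plan is to establish anisotropy by two nested applications of Springer's Theorem, using repeatedly that anisotropy passes from an overfield to a subfield. Concretely, $k(x_1,x_2) = k(x_2)(x_1)$ embeds in the complete field $k(x_2)((x_1))$ (residue field $k(x_2)$, of characteristic $\ne 2$), and $k(x_2)$ embeds in $k((x_2))$ (residue field $k$, of characteristic $\ne 2$); so it suffices to show $\varphi$ is anisotropic over $k(x_2)((x_1))$, which Springer's Theorem reduces to the anisotropy over $k(x_2)$ of the two residue forms of $\varphi$ for the $x_1$-adic valuation $v_{x_1}$, and the anisotropy of the troublesome one of these will in turn be handled by passing to $k((x_2))$ and applying Springer's Theorem for $v_{x_2}$.

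First I would compute the $v_{x_1}$-residue forms of $\varphi$. The entries $x_2+1$ and $-x_2-x_1$ of $\langle x_2+1,-x_2-x_1\rangle$ are $v_{x_1}$-units with residues $x_2+1$ and $-x_2$ (the latter because $-x_2-x_1$ has constant term $-x_2 \ne 0$ as a polynomial in $x_1$); the entries of $\langle 1,-x_2\rangle \otimes q'$ lie in $k(x_2)^\times$, hence are $v_{x_1}$-units fixed by reduction; and the entries of $x_1 \cdot (\langle\langle x_2\rangle\rangle \otimes q)$ have $v_{x_1}$-value $1$. Thus over $k(x_2)((x_1))$ we have $\varphi \simeq \varphi_1 \perp x_1 \cdot \varphi_2$ with first residue form $\overline{\varphi}_1 = \psi := \langle x_2+1,-x_2\rangle \perp \langle 1,-x_2\rangle \otimes q'$ and second residue form $\overline{\varphi}_2 = \langle\langle x_2\rangle\rangle \otimes q$, both over $k(x_2)$. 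Since $q$ is anisotropic over $k$, Lemma \ref{anisotropic tensor product} (with $r=1$) shows $\langle\langle x_2\rangle\rangle \otimes q$ is anisotropic over $k(x_2)$, so by Springer's Theorem it remains only to prove that $\psi$ is anisotropic over $k(x_2)$.

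For this, I claim $\psi$ is already anisotropic over $k((x_2))$. Writing $\langle 1,-x_2\rangle \otimes q' = q' \perp (-x_2)\cdot q'$ and sorting the entries of $\psi$ by $v_{x_2}$-value: $x_2+1$ and the entries of $q'$ are units with residues $1$ and the entries of $q'$, while $-x_2$ and the entries of $(-x_2)\cdot q'$ have value $1$ with second residues $-1$ and the entries of $-q'$. Hence over $k((x_2))$ we get $\psi \simeq \psi_1 \perp x_2 \cdot \psi_2$ with $\overline{\psi}_1 \simeq \langle 1\rangle \perp q' \simeq q$ and $\overline{\psi}_2 \simeq \langle -1\rangle \perp (-q') \simeq -q$, both over $k$. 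As $q$, hence $-q$, is anisotropic over $k$, Springer's Theorem gives that $\psi$ is anisotropic over $k((x_2))$, so over $k(x_2)$; combined with the previous paragraph, $\varphi$ is anisotropic over $k(x_2)((x_1))$, hence over $k(x_1,x_2)$.

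No single step here is hard: the residue computations are routine, and the real content is the shape of $\varphi$, arranged so that its two successive reductions peel off exactly copies of $q$ and $-q$. The one point requiring care is the order of the valuations: if one instead reduces $\varphi$ modulo $x_2$ first, the resulting residue form contains $x_1 \cdot (\langle -1\rangle \perp q)$, which is isotropic because $q$ represents $1$, so that order yields no conclusion — Springer's Theorem must be applied at $v_{x_1}$ before $v_{x_2}$.
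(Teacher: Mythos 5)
Your proof is correct and follows essentially the same route as the paper: pass to $k(x_2)((x_1))$, apply Springer's Theorem at $v_{x_1}$ (the second residue form $\langle\langle x_2\rangle\rangle\otimes q$ being handled by Lemma \ref{anisotropic tensor product}), and then show the first residue form is anisotropic by passing to $k((x_2))$, where its residue forms are $q$ and $-q$. The closing remark about the order of the two valuations is a nice observation but not part of the paper's argument.
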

\begin{proof}
We actually show that $\varphi$ is anisotropic over the field $k(x_2)((x_1))$ which contains $k(x_1, x_2)$. By Lemma \ref{anisotropic tensor product}, the second residue form of $\varphi$ is anisotropic over the residue field $k(x_2)$. So by Springer's Theorem, the lemma is proven if we show that the first residue form of $\varphi$,
\[
	\varphi_1 = \langle x_2 + 1, -x_2 \rangle \perp  \langle 1, - x_2 \rangle \otimes q',
\]
is anisotropic over $k(x_2)$. Rewrite $\varphi_1$ as
\[
	\varphi_1 = \langle x_2 + 1 \rangle \perp q' \perp -x_2 \cdot \left(\langle 1 \rangle \perp q' \right) \simeq  \left(\langle x_2 + 1\rangle \perp q'\right) \perp x_2 \cdot (-q)
\]
and consider $\varphi_1$ over $k((x_2))$. The first residue form of $\varphi_1$ over $k$ is $q$, and the second residue form of ~$\varphi_1$ is $-q$. By our choice of $q$, both residue forms of $\varphi_1$ are anisotropic over $k$. This implies that ~$\varphi_1$ is anisotropic over $k((x_2))$, which contains $k(x_2)$, and thus completes the proof of the lemma.
\end{proof}
\begin{remark}
If $q$ is any diagonal quadratic form over a field $k$, then scaling $q$ by its first entry results in a quadratic form over $k$ that represents 1. Moreover, if $q$ is anisotropic, then $q$ remains anisotropic after scaling by its first entry, and we can therefore apply Lemma \ref{anisotropy in highest dimension}.
\end{remark}

\begin{lemma}
\label{local isotropy of subforms}
Let $\ell$ be a field of characteristic $\ne 2$. Assume $\ell \in \mathscr{A}_i(2)$ for some $i \geq 0$ and $u(\ell) = 2^i$. Let $L_2 = \ell(x_1, x_2)$, and let $V_2$ be the set of discrete valuations on $L_2$ that are trivial on $L_1 = \ell(x_1)$. Let $q$ be an anisotropic $2^i$-dimensional quadratic form over $\ell$ that represents 1, and let ~$\varphi$ be the $2^{i + 2}$-dimensional quadratic form over $L_2$ defined by
\[
	\varphi = \langle x_2 + 1, -x_2 - x_1 \rangle \perp \langle 1, -x_2 \rangle \otimes q'  \perp x_1 \cdot \left(\langle \langle x_2 \rangle \rangle \otimes q \right).
\]
If $\psi$ is any subform of $\varphi$ such that $\dim \psi > 2^{i+1}$ and 
\[
	\langle x_2 + 1, -x_2 - x_1, x_1, x_1x_2 \rangle \subseteq \psi,
\] 
then $\psi$ is isotropic over $L_{2, v}$ for all $v \in V_2$.
\end{lemma}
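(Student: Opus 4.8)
The plan is to combine the explicit description of $V_2$ with Springer's Theorem. By \cite[Theorem~2.1.4]{ep}, applied to $k(t)$ with $k = L_1 = \ell(x_1)$ and $t = x_2$, we have $V_2 = \{v_\pi : \pi \in \mathscr{P}\} \cup \{v_\infty\}$, where $\mathscr{P}$ is the set of monic irreducible polynomials in $L_1[x_2]$, the completion at $v_\pi$ has residue field $\kappa_\pi \cong L_1[x_2]/(\pi)$, and $v_\infty$ is the degree valuation in $x_2$ with residue field $\cong L_1$. Write $\theta = \langle x_2+1,\, -x_2-x_1,\, x_1,\, x_1x_2 \rangle$ for the distinguished $4$-dimensional subform. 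Since $\theta \subseteq \psi$, it suffices to show, for each $v \in V_2$, that $\theta$ or $\psi$ is isotropic over $L_{2,v}$. I also record that $\varphi \simeq \theta \perp \sigma$, where $\sigma = \langle 1, -x_2 \rangle \otimes q' \perp x_1 \langle 1, x_2 \rangle \otimes q'$, so that by Witt Cancellation $\psi \simeq \theta \perp \rho$ for some subform $\rho \subseteq \sigma$ with $\dim \rho = \dim \psi - 4 > 2^{i+1} - 4$.

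First I would dispose of $v_\infty$ and of $v_\pi$ for $\pi \in \{x_2,\, x_2+1,\, x_2+x_1\}$. For each of these the residue field is $\cong \ell(x_1)$, and one computes the Springer decomposition of $\theta$ over $L_{2,v}$ by sorting the four diagonal entries of $\theta$ according to the parity of their $v$-values. In each case one of the two residue forms over $\ell(x_1)$ visibly represents $0$: e.g.\ at $v_{x_2}$ the unit-part residue form is $\langle 1, -x_1, x_1 \rangle$, at $v_{x_2+1}$ it is $\langle 1-x_1, x_1, -x_1 \rangle$, at $v_{x_2+x_1}$ it is $\langle 1-x_1, x_1, -x_1^2 \rangle \simeq \mathbb{H} \perp \langle x_1-x_1^2 \rangle$, and at $v_\infty$ (uniformizer $x_2^{-1}$) the residue form attached to the odd-valuation entries is $\langle 1, -1, x_1 \rangle$. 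By Springer's Theorem $\theta$, hence $\psi$, is isotropic over each such $L_{2,v}$.

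For the remaining valuations $v_\pi$, $\pi \in \mathscr{P} \smallsetminus \{x_2, x_2+1, x_2+x_1\}$, every diagonal entry of $\varphi$ is a $v_\pi$-unit, and $\kappa_\pi$ is a finite extension of $\ell(x_1)$, hence a finitely generated extension of transcendence degree $1$ of $\ell \in \mathscr{A}_i(2)$; by \cite[Theorems~2.3, 2.5]{leep} this gives $\kappa_\pi \in \mathscr{A}_{i+1}(2)$, so $u(\kappa_\pi) \le 2^{i+1}$. By Springer's Theorem it is enough to exhibit an isotropic residue form of $\psi$ over $\kappa_\pi$. The first residue form of $\psi$ contains $\overline{\theta}$, so if $\overline{\theta}$ is isotropic over $\kappa_\pi$ — which happens automatically when $u(\kappa_\pi) < 4$ — we are done. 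Otherwise $\overline{\theta}$ is anisotropic over $\kappa_\pi$, and one must argue using the decomposition $\psi \simeq \theta \perp \rho$ with $\rho$ a subform of $\sigma$ of codimension $< 2^{i+1}$: the unimodularity of $\sigma$ at $v_\pi$ (via the complement of $\rho$ in $\sigma$) bounds the $\pi$-divisible part of $\rho$, and this, combined with $\dim \psi > 2^{i+1}$, $u(\kappa_\pi) \le 2^{i+1}$, and the tensor structure $\sigma = \langle 1,-x_2\rangle\otimes q' \perp x_1\langle 1,x_2\rangle\otimes q'$, should yield an isotropic residue form of $\psi$.

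The step I expect to be the main obstacle is exactly this last one: the ``generic'' $v_\pi$ at which $\overline{\theta}$ is anisotropic over $\kappa_\pi$. A crude dimension count on the two residue forms of $\psi$ does not close the argument by itself, so one has to exploit the precise interaction between the $4$-dimensional form $\theta$ and the factor $\langle 1, -x_2\rangle\otimes q' \perp x_1\langle 1, x_2\rangle\otimes q'$ of $\varphi$ — roughly, that whenever $\sigma$ represents an element of odd $v_\pi$-valuation (so that $\psi$ can acquire a nontrivial $\pi$-divisible summand), the unit part of $\psi$ has an isotropic reduction modulo $\pi$.
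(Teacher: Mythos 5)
Your handling of $v_\infty$ and of $v_{x_2}$, $v_{x_2+1}$, $v_{x_2+x_1}$ is correct and is essentially the paper's argument (the paper just exhibits isotropic residues of small subforms of your $\theta=\langle x_2+1,-x_2-x_1,x_1,x_1x_2\rangle$ at each of these places). The genuine gap is exactly the case you flag and leave open: the generic $v_\pi$, $\pi\notin\{x_2,x_2+1,x_2+x_1\}$, which is the bulk of $V_2$. The paper closes this case in one line, with no interaction between $\theta$ and the complementary factor: every diagonal entry in the displayed diagonalization of $\varphi$ is an $L_1^\times$-multiple of a product of $x_2$, $x_2+1$, $x_2+x_1$, so for such $\pi$ all entries of $\psi$ are $v_\pi$-units; hence $\psi$ reduces to an $m$-dimensional form over $\kappa_\pi$, and since $\kappa_\pi$ is a finite extension of $\ell(x_1)$ one has $u(\kappa_\pi)\le 2^{i+1}<m$, so the reduction is isotropic and Springer's Theorem finishes. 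The whole point is that the second residue form is empty, so the $u$-invariant bound applies to all of $\psi$ at once.

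Your proposed route for the generic case (write $\psi\simeq\theta\perp\rho$ with $\rho\subseteq\sigma$ and argue via unimodularity of $\sigma$ plus a dimension count) cannot be completed for a literally arbitrary Witt-theoretic subform, and your instinct that ``a crude dimension count does not close the argument'' is right for a structural reason: nothing prevents such a $\psi$ from acquiring a nonzero anisotropic second residue form at a generic $\pi$, and with $2^{i+1}<\dim\psi\le 3\cdot 2^i$ there is room for both residue forms to be anisotropic over $\kappa_\pi$. Concretely, take $\ell=\F_3$, $i=1$, $q=\langle 1,1\rangle$ (so $q'=\langle 1\rangle$), and $\rho=\langle 1-x_2\rangle\subseteq\langle 1,-x_2\rangle\otimes q'$: then $\theta\perp\langle 1-x_2\rangle$ is a $5$-dimensional subform of $\varphi$ containing $\theta$, but at $\pi=x_2-1$ its residue forms are $\langle 2,-1-x_1,x_1,x_1\rangle$ and $\langle -1\rangle$ over $\F_3(x_1)$, both anisotropic (for the first, look $x_1$-adically: the residues $\langle -1,-1\rangle$ and $\langle 1,1\rangle$ are anisotropic over $\F_3$), so it is anisotropic over $L_{2,\pi}$. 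So the statement has to be read the way the paper's proof and its use in Theorem \ref{SHP failure over A_i fields, weak assumptions} read it, namely for subforms presented by diagonal entries of $\varphi$ (so that all entries are $v_\pi$-units at every generic $\pi$); with that reading, the missing step in your write-up is the simple $u$-invariant argument above, not the delicate bookkeeping between $\theta$ and $\sigma$ that you anticipated.
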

\begin{proof} We prove the lemma by considering several cases for $v \in V_2$.

\underline{Case 1}: $v = v_{\infty}$ is the degree valuation with uniformizer $x_2^{-1}$.

The form $\psi$ contains the subform $\langle x_2 + 1, -x_2 - x_1 \rangle = x_2 \cdot \left\langle 1 + x_2^{-1}, -1 - x_1x_2^{-1} \right\rangle$. Scaling by ~$x_2^{-2}$, we have
\[
	\langle x_2 + 1, -x_2 - x_1 \rangle \simeq x_2^{-1} \cdot \left\langle 1 + x_2^{-1}, -1 - x_1x_2^{-1} \right\rangle,
\]
whose second residue form is $\langle 1, -1 \rangle$, which is isotropic. The second residue form of $\psi$ is therefore isotropic over the residue field $L_1$, hence $\psi$ is isotropic over $L_{2, v}$ by Springer's Theorem.
 
\underline{Case 2}: $v = v_{\pi}$, where $\pi = x_2$, $x_2 + 1$, or $x_2 + x_1$ is a divisor of at least one entry of $\psi$. 

The form $\psi$ contains the subforms $\langle -x_2 - x_1, x_1 \rangle, \langle x_1, x_1x_2 \rangle$, and $\langle x_2 + 1, x_1, x_1x_2 \rangle$, each of which reduces to an isotropic form over the respective residue field $\kappa_{\pi}$. So the first residue form of $\psi$ is isotropic over the residue field, hence $\psi$ is isotropic over $L_{2, \pi}$.

\underline{Case 3}: $v = v_{\pi}$, where $\pi \in L_1[x_2]$ is a monic irreducible polynomial different from $x_2$, $x_2 + 1$, and $x_2 + x_1$.

Let $m = \dim \psi$. In this case, each entry of $\psi$ is a unit in $\mathcal{O}_{v_{\pi}}$, so $\psi$ reduces to an $m$-dimensional quadratic form over the residue field $\kappa_{\pi}$. Since $\kappa_{\pi}$ is a finite extension of $L_1$, it satisfies property $\mathscr{A}_{i + 1}(2)$, thus $u\left(\kappa_{\pi}\right) \leq 2^{i+1} < m$ (see Section \ref{notation}). So the first residue form of $\psi$ is isotropic over $\kappa_{\pi}$, which implies that $\psi$ is isotropic over $L_{2, \pi}$. 

This covers all cases of $v \in V_2$, so the proof is complete.
\end{proof}
We can now prove Theorem \ref{SHP failure over A_i fields, weak assumptions}.
\begin{proof}[Proof of Theorem \ref{SHP failure over A_i fields, weak assumptions}]
We first observe that if $r > 2$, then $L_r = \ell(x_1, \ldots, x_r)$ is isomorphic to $\ell(x_1, \ldots, x_{r-2})(x_{r-1}, x_r)$. If $\ell \in \mathscr{A}_i(2)$, then $\ell(x_1, \ldots, x_{r-2}) \in \mathscr{A}_{i+ r - 2}(2)$, and by Lemma \ref{anisotropic tensor product}, if $\gamma$ is an anisotropic form over $\ell$, then the form $\langle \langle x_1, \ldots, x_{r-2} \rangle \rangle \otimes \gamma$ is anisotropic over $\ell(x_1, \ldots, x_{r-2})$. Hence $u\left(\ell(x_1, \ldots, x_{r-2})\right) = 2^{i + r - 2}$. It therefore suffices to prove the theorem for $r = 2$.

By assumption, $u(\ell) = 2^i$, so there exists a $2^i$-dimensional anisotropic quadratic form $q$ over $\ell$, which we can assume represents 1. By Lemmas \ref{anisotropy in highest dimension} and \ref{local isotropy of subforms}, if $\varphi$ is the $2^{i+2}$-dimensional form over $L_2$ defined by
\[
	\varphi = \langle x_2 + 1, -x_2 - x_1 \rangle \perp \langle 1, -x_2 \rangle \otimes q'  \perp x_1 \cdot \left(\langle \langle x_2 \rangle \rangle \otimes q \right),
\]
then any subform $\psi$ of $\varphi$ such that $m = \dim \psi > 2^{i+1}$ and $\langle x_2 + 1, -x_2 - x_1, x_1, x_1x_2 \rangle \subseteq \psi$, in particular $\varphi$ itself, violates the $\SHP$ over $L_2$ with respect to $V_2$.
\end{proof}

\begin{examples} The following are special cases of Theorem \ref{SHP failure over A_i fields, weak assumptions}. 

\begin{enumerate}
	\item For any prime $p \ne 2$, the field $\F_p \in \mathscr{A}_1(2)$ and $u\left(\F_p\right) = 2$. Then for any $\alpha \in \F_p^{\times} \setminus \F_p^{\times 2}$, the five-dimensional quadratic form over $\F_p(x_1, x_2)$ defined by
	\[
		\langle x_2 + 1, -x_2 - x_1, -\alpha, x_1, x_1x_2 \rangle
	\]
	violates the $\SHP$ with respect to $V_2$. 
	
	\item By \cite[Corollary ~2.7]{leep}, for any prime $p$, the field $\Q_p \in \mathscr{A}_2(2)$ and $u\left(\Q_p\right) = 4$. Let $u$ be a lift of a non-square in $\F_p^{\times}$ to $\Q_p$. Then the nine-dimensional quadratic form over $\Q_p(x_1, x_2)$ defined by
	\[
		\langle x_2 + 1, -x_2 - x_1, -p, -u, x_2u, x_1, x_1x_2, -x_1u, -x_1x_2u \rangle
	\]
	violates the $\SHP$ with respect to $V_2$.
\end{enumerate}
\end{examples}

\begin{remarks}
\begin{enumerate}
	\item If $i = 0$ and $r = 2$, the assumption that $m \ne 3$ in Theorem \ref{SHP failure over A_i fields, weak assumptions} is necessary. Indeed, by Proposition \ref{WHP over one-variable rational function field}\ref{part c}, three-dimensional quadratic forms over $L_2$ \textit{satisfy} the $\SHP$ with respect to $V_2$.
	
	\item In some instances, the assumption in Theorem \ref{SHP failure over A_i fields, weak assumptions} that $r \geq 2$ is necessary. For example, if $p \ne 2$ is a prime, then $\ell = \F_p \in \mathscr{A}_1(2)$, and the Hasse-Minkowski Theorem says that the $\SHP$ holds over $\F_p(x)$ with respect to all discrete valuations on $\F_p(x)$. Any discrete valuation on $\F_p$ is trivial, so the conclusion of Theorem \ref{SHP failure over A_i fields, weak assumptions} is false if $\ell = \F_p$ and $r = 1$.
\end{enumerate}
\end{remarks}

\section{Divisorial discrete valuations}
\label{main results}
Let $K/k$ be a finitely generated field extension of transcendence degree $r \geq 1$. A \textit{normal model} of $K/k$ is a normal $k$-variety $\mathscr{X}$ with function field $K$. A discrete valuation $v$ on $K$, trivial on $k$, is \textit{divisorial} if there exists some normal model $\mathscr{X}$ of $K/k$ and some prime divisor $D$ on $\mathscr{X}$ such that ~$v$ is equivalent to the discrete valuation on $K$ induced by $D$. Because $K$ has transcendence degree $r$ over $k$, if $v$ is a divisorial discrete valuation on $K$, then its residue field $\kappa_v$ is a finitely generated field extension of transcendence degree $r - 1$ over $k$.

Given a field $k$ equipped with a non-empty set $V$ of non-trivial discrete valuations, we say that ~$V$ \textit{satisfies the finite support property} if, given any $a \in k^{\times}$, the set
\[
	\{v \in V \bigm| v(a) \ne 0 \}
\]
is finite. Sets of discrete valuations that satisfy the finite support property arise naturally, and have also been considered in \cite{spinor groups, rap}. If $\mathscr{X}$ is a projective integral regular $k$-scheme with function field $K$, then by \cite[Lemma ~II.6.1]{hartshorne}, the set $V_{\mathscr{X}}$ of discrete valuations on $K$ induced by prime divisors on $\mathscr{X}$ satisfies the finite support property. We saw a particular example of this in Section ~\ref{transcendence degree 1} for $K = k(t)$: if $\mathscr{X} = \P^1_k$, then $V_{\mathscr{X}} = V_{K/k}$. Much like what we saw in Section ~\ref{transcendence degree 1}, for certain ground fields $k$ and $k$-varieties $\mathscr{X}$ with function field $K$, the Hasse principle for isometry over $K$ is satisfied with respect to $V_{\mathscr{X}}$ (Proposition \ref{trivial sha}), but numerous counterexamples exist over ~$K$ to the Hasse principle for isotropy with respect to $V_{\mathscr{X}}$ (Theorem \ref{main theorem}).

\subsection{The Hasse principle for isometry}

Let $k$ be any field of characteristic $\ne 2$, and for any $r \geq 1$, let $K_r = k(x_1, \ldots, x_r)$ be a purely transcendental field extension of transcendence degree ~$r$ over $k$. We saw in Section \ref{transcendence degree 1} that the Hasse principle for isometry holds over $K_r$ with respect to the set $V_r$ of discrete valuations on $K_r$ that are trivial on $K_{r-1}$ (here taking $K_0 = k$). Consequently, for any set $V$ of discrete valuations on $K_r$ that contains $V_r$, the $\WHP$ holds with respect to ~$V$; in particular, with respect to the set of all discrete valuations on $K_r$.

By \cite[Example ~VII.29.28]{kmrt}, we know that, given an $n$-dimensional quadratic form $q$ over a field $k$, the pointed Galois cohomology set $H^1(k, \Or_n(q))$ is in bijection with the set of isometry classes of $n$-dimensional quadratic forms over $k$, with (the isometry class of) $q$ being the distinguished element. By \cite[Example ~VII.29.29]{kmrt}, the pointed Galois cohomology set $H^1(k, \SO_n(q))$ is in bijection with the set of isometry classes of $n$-dimensional quadratic forms over $k$ with the same discriminant as $q$, again with $q$ being the distinguished element. If $W$ is a non-empty set of non-trivial discrete valuations on $k$, then a quadratic form $\varphi$ over $k$ is isometric to $q$ over $k_w$ for all $w \in W$ if and only if (the isometry class of) $\varphi$ belongs to the kernel of the global-to-local map
\begin{align*}
	H^1(k, \Or_n(q)) \to \prod_{w \in W} H^1(k_w, \Or_n(q)).
\end{align*}
The kernel of this global-to-local map gives a measure of the failure of the Hasse principle for isometry with respect to $W$. 

Let $K_r = k(x_1, \ldots, x_r)$ be as above, let $\mathscr{X}$ be a smooth projective integral model of $K_r/k$, and let $V_{\mathscr{X}}$ be the set of discrete valuations on $K_r$ induced by prime divisors on $\mathscr{X}$. For an $n$-dimensional quadratic form $q$ over $K_r$, let
\begin{align*} 
	\Sha_{\mathscr{X}}(K_r, \Or_n(q)) &= \ker\left(H^1(K_r, \Or_n(q)) \to \prod_{v \in V_{\mathscr{X}}} H^1(K_{r, v}, \Or_n(q))\right), \allowdisplaybreaks\\ 
	\Sha_{\mathscr{X}}(K_r, \SO_n(q)) &= \ker\left(H^1(K_r, \SO_n(q)) \to \prod_{v \in V_{\mathscr{X}}} H^1(K_{r, v}, \SO_n(q))\right), \allowdisplaybreaks \\ 
	\Sha^i_{\mathscr{X}}(K_r, \mu_2) &= \ker\left(H^i\left(K_r, \mu_2\right) \to \prod_{v \in V_{\mathscr{X}}} H^i(K_{r, v}, \mu_2)\right),\ i \geq 1.
\end{align*}
Since $\mu_2 = \{\pm 1 \}$ is contained in $K_r$, for any $j$ we can identify the Galois modules $\mu_2$ and $\mu_2^{\otimes j}$, which allows us to identify $H^i\left(K_r, \mu_2^{\otimes j}\right)$ and $H^i\left(K_r, \mu_2\right)$ for all $i$. 

For any discrete valuation $v$ on $K_r$ with residue characteristic $\ne 2$, we have well-defined residue homomorphisms (see \cite[II, \textsection 7]{gms})
\[
	\partial_v^i: H^i\left(K_r, \mu_2\right) \to H^{i-1}\left(\kappa_v, \mu_2\right).
\]
Let $V_{K_r/k}$ be the set of all discrete valuations on $K_r$ that are trivial on $k$. Then for any $v \in V_{K_r/k}$, since $\Char k \ne 2$, the residue field $\kappa_v$ has characteristic $\ne 2$ as well. Moreover, the set $V_{K_r/k}$ equals the set of discrete valuations on $K_r$ with residue characteristic $\ne 2$ whose valuation ring contains $k$, as this last condition forces invertible elements of $k$ to have valuation 0. For any $i \geq 1$, we consider the following unramified cohomology groups:
\begin{align*}
	H^i_{nr}\left(K_r/k, \mu_2\right) &= \bigcap_{v \in V_{K_r/k}} \ker \partial^i_v, \allowdisplaybreaks\\
	H^i\left(K_r, \mu_2\right)_{\mathscr{X}} &= \bigcap_{v \in V_{\mathscr{X}}} \ker \partial^i_v.
\end{align*}
Once again, the following result is well-known to experts, but does not seem to be written explicitly in the literature. We include a proof using unramified cohomology for the sake of completeness.
\begin{prop}
\label{trivial sha}
Let $k$ be any field of characteristic $\ne 2$, and for any $r \geq 1$ let $K_r = k(x_1, \ldots, x_r)$. Let $\mathscr{X}$ be a smooth projective integral model of $K_r/k$, and let $V_{\mathscr{X}}$ be the set of discrete valuations on $K_r$ induced by prime divisors on $\mathscr{X}$. Then for any $n$-dimensional quadratic form $q$ over $K_r$, the set $\Sha_{\mathscr{X}}(K_r, \emph{O}_n(q))$ is trivial; i.e., the Hasse principle for isometry holds over $K_r$ with respect to $V_{\mathscr{X}}$.
\end{prop}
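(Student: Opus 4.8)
The plan is to deduce the Hasse principle for isometry from the triviality of the cohomological obstruction groups $\Sha^i_{\mathscr{X}}(K_r, \mu_2)$ via the cohomological invariants of quadratic forms together with the Arason--Pfister Hauptsatz. An element of $\Sha_{\mathscr{X}}(K_r, \Or_n(q))$ is (the class of) an $n$-dimensional form $\varphi$ over $K_r$ with $\varphi \simeq q$ over $K_{r,v}$ for all $v \in V_{\mathscr{X}}$, so, setting $\tau = \varphi \perp (-q)$, triviality of $\Sha_{\mathscr{X}}(K_r, \Or_n(q))$ is equivalent to the assertion that every even-dimensional form $\tau$ over $K_r$ with $\tau = 0$ in $W(K_{r,v})$ for all $v \in V_{\mathscr{X}}$ satisfies $\tau = 0$ in $W(K_r)$ (Witt cancellation then promotes hyperbolicity of $\varphi \perp (-q)$ to $\varphi \simeq q$). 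To prove this I would show $\tau \in I^m(K_r)$ for all $m$ by induction, the cases $m = 0,1$ being clear since $\dim \tau$ is even; for the inductive step, if $\tau \in I^m(K_r)$ then the class $e_m(\tau) \in H^m(K_r, \mu_2)$ obtained via the (injective) map $I^m(K_r)/I^{m+1}(K_r) \hookrightarrow H^m(K_r, \mu_2)$ restricts to $e_m(\tau_{K_{r,v}}) = 0$ over each $K_{r,v}$, so $e_m(\tau) \in \Sha^m_{\mathscr{X}}(K_r, \mu_2)$; if this group vanishes then $e_m(\tau) = 0$ and hence $\tau \in I^{m+1}(K_r)$. Then $\tau \in \bigcap_m I^m(K_r) = 0$ by Arason--Pfister, so $\tau = 0$.

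Thus everything reduces to showing $\Sha^i_{\mathscr{X}}(K_r, \mu_2) = 0$ for all $i \geq 1$. Given a class $\alpha$ there, each residue $\partial_v^i(\alpha)$ vanishes (it factors through $H^i(K_{r,v}, \mu_2)$), so $\alpha \in H^i(K_r, \mu_2)_{\mathscr{X}}$. Because $\mathscr{X}$ is smooth and projective, model-independence of unramified cohomology identifies $H^i(K_r, \mu_2)_{\mathscr{X}}$ with $H^i_{nr}(K_r/k, \mu_2)$, and homotopy invariance (which can be proved directly by iterating the Faddeev exact sequence) identifies the latter with $\mathrm{res}_{K_r/k}(H^i(k, \mu_2))$ since $K_r/k$ is purely transcendental. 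Hence $\alpha = \mathrm{res}_{K_r/k}(\alpha_0)$ for a unique $\alpha_0 \in H^i(k, \mu_2)$. Finally I would pick a $k$-rational point $P$ of $\mathscr{X}$ (available since $\mathscr{X}$ is $k$-rational; for $k$ finite invoke Esnault's theorem on rationally connected varieties) and a prime divisor $D$ of $\mathscr{X}$ that is regular at $P$ — $D = P$ itself when $r = 1$, a general hyperplane section through $P$ when $r \geq 2$. The restriction of $\alpha$ to $K_{r, v_D}$ vanishes because $v_D \in V_{\mathscr{X}}$, and it equals the image of $\mathrm{res}_{\kappa_{v_D}/k}(\alpha_0)$ under the injective map $H^i(\kappa_{v_D}, \mu_2) \to H^i(K_{r, v_D}, \mu_2)$, while $\mathrm{res}_{\kappa_{v_D}/k}$ is split injective via specialization at $P$; therefore $\alpha_0 = 0$ and $\alpha = 0$.

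The external ingredients here are the Arason--Pfister Hauptsatz, the injectivity half of the Milnor conjecture, and the standard homotopy-invariance and model-independence properties of unramified cohomology. The step I expect to be the main obstacle is the second paragraph: identifying $\Sha^i_{\mathscr{X}}(K_r, \mu_2)$ with constant cohomology genuinely uses that unramified cohomology does not depend on the chosen smooth projective model $\mathscr{X}$ (and not merely that $K_r$ is rational), together with the production of a prime divisor on $\mathscr{X}$ with a sufficiently split residue field in order to kill the constant part. One could alternatively run the whole argument with the unramified Witt group $W(K_r)_{\mathscr{X}}$ in place of the groups $\Sha^i_{\mathscr{X}}(K_r, \mu_2)$, avoiding the Milnor conjecture at the cost of invoking the analogous facts for the Witt sheaf; the cohomological version above matches the machinery set up before the proposition.
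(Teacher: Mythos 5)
Your proposal is correct, and its second half coincides with the paper's: to show $\Sha^i_{\mathscr{X}}(K_r,\mu_2)=0$ for all $i\geq 1$, both arguments pass from $\Sha^i_{\mathscr{X}}$ to the unramified group $H^i(K_r,\mu_2)_{\mathscr{X}}$, identify this with $H^i(k,\mu_2)$ via Colliot-Th\'el\`ene's model-independence and homotopy-invariance theorems, and then kill the constant classes by restricting to the completion at one well-chosen valuation of $V_{\mathscr{X}}$. Where you genuinely diverge is the reduction of $\Sha_{\mathscr{X}}(K_r,\Or_n(q))$ to these $\mu_2$-groups: the paper uses the exact sequence of pointed sets coming from $1\to\SO_n(q)\to\Or_n(q)\to\mu_2\to 1$ and then cites \cite[Theorem 3.4]{spinor groups} to deduce triviality of $\Sha_{\mathscr{X}}(K_r,\SO_n(q))$ from the vanishing of all $\Sha^i_{\mathscr{X}}(K_r,\mu_2)$, whereas you argue directly in the Witt group, pushing a locally hyperbolic even-dimensional form up the filtration $I^m(K_r)$ via the invariants $e_m$ and finishing with Arason--Pfister. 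Your route is self-contained where the paper outsources to Chernousov--Rapinchuk--Rapinchuk, at the price of invoking the injectivity of $I^m/I^{m+1}\hookrightarrow H^m(K_r,\mu_2)$ for all relevant $m$ (Orlov--Vishik--Voevodsky), not just the classical $m\leq 2$ cases; since $\dim\tau=2n$ is fixed, you only need $m$ with $2^m\leq 2n$, which is worth saying explicitly. The endgames for the constant classes also differ slightly: the paper produces a prime divisor with residue field isomorphic to $K_{r-1}$ and uses $H^i(k,\mu_2)\hookrightarrow H^i(K_{r-1},\mu_2)$, while you use a prime divisor regular at a $k$-rational point together with specialization. Two small points in your version: the existence of a $k$-point on $\mathscr{X}$ is most cleanly justified by Lang--Nishimura (a smooth $k$-point of $\P^r$ plus properness of $\mathscr{X}$), Esnault's theorem is not needed; and over a finite field the phrase ``general hyperplane section through $P$'' requires a Bertini-over-finite-fields argument (smoothness at $P$ and irreducibility), or alternatively note that $H^i(k,\mu_2)=0$ for $i\geq 2$ when $k$ is finite and treat $i=1$ separately -- a sentence you should add if you write this up.
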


\begin{remark}
If $V_r \subseteq V_{\mathscr{X}}$, this follows from Proposition \ref{WHP over one-variable rational function field}\ref{part a}, so there is nothing to prove. However, $V_{\mathscr{X}}$ does not necessarily contain $V_r$. For example, consider $\P^1_k \times \P^1_k$ with coordinates $x_1, x_2$. Let $P$ be the point given by $x_1 = x_2 = 0$, and let $L$ be the line $x_2 = 0$. By blowing up $\P^1_k \times \P^1_k$ at $P$, then blowing down the proper transform of $L$, we arrive at a smooth projective model $\mathscr{X}$ of $K_2/k$ such that the $x_2$-adic valuation, which belongs to $V_2$, is not contained in $V_{\mathscr{X}}$.
\end{remark}
 
\begin{proof}[Proof of Proposition \ref{trivial sha}]
The long exact cohomology sequence arising from the short exact sequence of groups $1 \to \SO_n(q) \to \Or_n(q) \to \mu_2 \to 1$ yields the 3-term short exact sequence
\begin{equation}
\label{exact sequence}
	\Sha_{\mathscr{X}}\left(K_r, \SO_n(q)\right) \rightarrow \Sha_{\mathscr{X}}\left(K_r, \Or_n(q)\right) \rightarrow \Sha^1_{\mathscr{X}}\left(K_r, \mu_2\right).
\end{equation}
Therefore, to prove Proposition \ref{trivial sha}, it suffices to show that the first and third terms of (\ref{exact sequence}) are trivial. For the third term of (\ref{exact sequence}), we show more: $\Sha^i_{\mathscr{X}}(K_r, \mu_2)$ is trivial for all $i \geq 1$.

For any $i \geq 1$, $\Sha^i_{\mathscr{X}}(K_r, \mu_2) \subseteq H^i\left(K_r, \mu_2\right)_{\mathscr{X}}$ by the definition of $\Sha^i_{\mathscr{X}}(K_r, \mu_2)$ and unramified cohomology. By \cite[Theorems ~4.1.1, 4.1.5]{colliot:santa_barbara}, for any $i \geq 1$, 
\[
	H^i\left(K_r, \mu_2\right)_{\mathscr{X}} \xrightarrow[]{\sim} H^i_{nr}\left(K_r/k, \mu_2\right) \xrightarrow[]{\sim} H^i(k, \mu_2).
\]
So we may view $\Sha^i_{\mathscr{X}}(K_r, \mu_2) \subseteq H^i(k, \mu_2)$. Since $\mathscr{X}$ is rational, there is a codimension one point $x$ on $\mathscr{X}$ whose induced discrete valuation $v_x$ has residue field isomorphic to ~$K_{r-1}$. The map $\psi_x: H^i(k, \mu_2) \to H^i(K_{r, v_x}, \mu_2)$ factors as
\[
	H^i(k, \mu_2) \xrightarrow[]{\sim} H^i_{nr}(K_{r-1}/k, \mu_2) \hookrightarrow H^i(K_{r-1}, \mu_2) \xrightarrow[]{\sim} H^i_{nr}(K_{r, v_x}, \mu_2) \hookrightarrow H^i(K_{r, v_x}, \mu_2).
\]
Here, the first isomorphism follows from \cite[Theorem 4.1.5]{colliot:santa_barbara}; the second and fourth maps are inclusions; and the third map is an isomorphism by the Gersten conjecture (see, e.g., \cite[pp. ~27]{colliot:santa_barbara}) and \cite[Theorem III.4.9]{artin}. The restriction of the injection $\psi_x$ to $\Sha^i_{\mathscr{X}}(K_r, \mu_2) \subseteq \ker \psi_x$ is trivial by the definition of $\Sha$. Hence $\Sha_{\mathscr{X}}^i(K_r, \mu_2)$ is trivial.

The triviality of $\Sha^i_{\mathscr{X}}(K_r, \mu_2)$ for all $i \geq 1$ then implies, by \cite[Theorem ~3.4]{spinor groups}, that $\Sha_{\mathscr{X}}(K_r, \SO_n(q))$ is trivial as well. This completes the proof of Proposition \ref{trivial sha}.	
\end{proof}

\begin{remark}
The statement of \cite[Theorem ~3.4]{spinor groups} assumed that $\dim q \geq 5$, but this assumption was not used in the proof.
\end{remark}

\subsection{The Hasse principle for isotropy}

In this section, we prove the main theorem:
\begin{theorem}
\label{main theorem}
Let $k$ be an algebraically closed field of characteristic $\ne 2$ that is not the algebraic closure of a finite field. Let $K$ be any finitely generated field extension of transcendence degree $r \geq 2$ over $k$, and let $V$ be any non-empty set of non-trivial divisorial discrete valuations on $K$ that satisfies the finite support property. Then for any integer $m \ne 3$ such that
\[
	2^{r-1} + 1 \leq m \leq 2^r,
\]
there is an $m$-dimensional quadratic form over $K$ that violates the Hasse principle for isotropy with respect to $V$.
\end{theorem}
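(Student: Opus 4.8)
The plan is to obtain the $m$-dimensional counterexamples by cutting down the $2^r$-dimensional counterexample of Auel and Suresh to a subform, using the finite support property to reduce the local conditions to finitely many valuations and the bound $u(\kappa_v)\leq 2^{r-1}$ for divisorial residue fields to dispose of all the others.

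First, the case $m=2^r$ is immediate: by \cite[Theorem~1]{auel-suresh} there is a $2^r$-dimensional quadratic form $\varphi$ over $K$ that is anisotropic over $K$ but isotropic over $K_v$ for every discrete valuation $v$ on $K$, and since $V$ consists of discrete valuations on $K$, this $\varphi$ already violates the $\SHP$ with respect to $V$. In particular, when $r=2$ there is nothing further to prove, since $2^{r-1}+1\leq m\leq 2^r$ together with $m\neq 3$ forces $m=4=2^r$. So from now on I assume $r\geq 3$ and $2^{r-1}<m<2^r$ (in this range the condition $m\neq 3$ is automatic).

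Fix $\varphi\simeq\langle a_1,\ldots,a_{2^r}\rangle$ as above and let $\psi$ be an $m$-dimensional subform obtained by keeping $m$ of the entries $a_i$; which ones is to be decided below. Since $\psi\subseteq\varphi$ and $\varphi$ is anisotropic over $K$, $\psi$ is anisotropic over $K$, so it remains to guarantee that $\psi$ is isotropic over $K_v$ for every $v\in V$. By the finite support property the set $S:=\{\,v\in V\mid v(a_i)\neq 0\text{ for some }i\,\}$ is finite. If $v\in V\setminus S$, then every entry of $\psi$ is a unit in $\mathcal{O}_v$, so $\psi$ reduces to an $m$-dimensional form over $\kappa_v$; since $v$ is divisorial and $k$ is algebraically closed, $\kappa_v$ is a $C_{r-1}$ field, so $u(\kappa_v)\leq 2^{r-1}<m$ and the residue form of $\psi$ is isotropic over $\kappa_v$ (see Sections \ref{notation} and \ref{main results}). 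As $\Char\kappa_v\neq 2$, Springer's Theorem (applied with trivial second residue form) then shows $\psi$ is isotropic over $K_v$. Thus, whatever $m$ entries are kept, $\psi$ satisfies the local condition at every $v\notin S$.

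The remaining, and main, difficulty is to choose the $m$ entries so that $\psi$ is also isotropic over $K_v$ for each of the finitely many $v\in S$. Over such a $K_v$, after scaling the $a_i$ by squares so that $v(a_i)\in\{0,1\}$, Springer's Theorem writes $\varphi_{K_v}\simeq\varphi_1\perp\pi_v\varphi_2$ with $\varphi_1,\varphi_2$ having unit entries, and the isotropy of $\varphi_{K_v}$ forces one of the residue forms $\overline{\varphi}_1,\overline{\varphi}_2$ over $\kappa_v$ (which again satisfies $u(\kappa_v)\leq 2^{r-1}$) to be isotropic; the residue forms of $\psi$ are subforms of $\overline{\varphi}_1,\overline{\varphi}_2$, and one needs one of them to remain isotropic over $\kappa_v$, simultaneously for all $v\in S$. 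I expect this to be the heart of the argument. When $\varphi_{K_v}$ is ``isotropic enough'' — precisely, when its Witt index is at least $2^r-m+1$ — every $m$-dimensional subform is automatically isotropic over $K_v$, so the real difficulty is confined to the valuations at which $\varphi$ becomes only barely isotropic; these will have to be handled either by arranging that $\varphi$ (or a form built from it) has large enough local Witt index there, or by a careful combinatorial choice of entries, e.g.\ forcing $\psi$ to contain a fixed low-dimensional subform engineered so that its residue forms are isotropic over $\kappa_v$ for every $v\in S$, in the spirit of Lemma \ref{local isotropy of subforms}. Once such a $\psi$ is produced, it is an $m$-dimensional quadratic form over $K$ that is anisotropic over $K$ and isotropic over all $K_v$ with $v\in V$, hence violates the $\SHP$ with respect to $V$.
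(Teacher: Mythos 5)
Your reduction of the problem is sound as far as it goes: the $m=2^r$ case via \cite[Theorem~1]{auel-suresh}, the observation that the finite support property isolates a finite bad set $S$, and the $u(\kappa_v)\leq 2^{r-1}<m$ argument with Springer's Theorem at all $v\notin S$ all match the mechanisms used in the paper. But the step you yourself flag as ``the heart of the argument'' --- arranging isotropy over $K_v$ for the finitely many $v\in S$ --- is exactly the missing content, and your proposed route (choosing which $m$ diagonal entries of the Auel--Suresh form $\varphi$ to keep) is not how it can be, or is, done. A subform of a locally isotropic form has no reason to stay locally isotropic, and no combinatorial selection of entries is exhibited or even argued to exist; the paper does not produce its lower-dimensional counterexamples as subforms of $\varphi$ at all. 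Instead, for $2^{r-1}+2\leq m\leq 2^r$ it argues contrapositively via Proposition \ref{new SHP from old over A_i fields} and Corollary \ref{most dimensions}: writing an $(m+1)$-dimensional locally isotropic form as $\langle a_1,a_2\rangle\perp q_2$, one uses weak approximation at the finite bad set to choose $z\in D_L(\langle a_1,a_2\rangle)$ lying in the right square classes of $L_v$ for $v\in T$, so that the $m$-dimensional form $\langle z\rangle\perp q_2$ is again locally isotropic everywhere; this replacement (not entry deletion) is what solves the simultaneous local conditions you left open.

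Moreover, your strategy cannot reach the endpoint $m=2^{r-1}+1$ even in principle along these lines: there the codimension-one piece $q_2$ has dimension exactly $2^{r-1}$, which no longer exceeds $u(\kappa_v)$, so the ``all but finitely many valuations are automatic'' step breaks down, and indeed Proposition \ref{new SHP from old over A_i fields} is stated only for $m\geq 2^{i+r-1}+2$ for this reason. The paper handles dimension $2^{r-1}+1$ by a separate explicit construction: starting from the $(r-1)$-variable Auel--Suresh form $q_{r-1}=\langle\langle x_1,\ldots,x_{r-1};f_{r-1}\rangle\rangle$, it introduces a \emph{new} variable and takes $\widetilde{q}_{r-1}=q_{r-1}\perp\langle -x_r^2-x_1\cdots x_{r-1}\rangle$, proving global anisotropy (Lemma \ref{global anisotropy}, via Lemma \ref{anisotropic twisted Pfister form}) and isotropy at every discrete valuation of $k(x_1,\ldots,x_r)$ (Lemma \ref{local isotropy}), and then transfers the counterexample to $K$ by Bogomolov's trick (Lemma \ref{reduction to rational function fields}). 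So the gap in your proposal is genuine: both the weak-approximation device for the intermediate dimensions and the entire construction for dimension $2^{r-1}+1$ are absent, and the subform-of-$\varphi$ approach you sketch would not supply either.
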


\begin{example}
Let $K = k(x, y, z)$, where $k$ is an algebraically closed field of characteristic $\ne 2$ that is not the algebraic closure of a finite field, and let $\mathscr{X} = \P^3_{k}$. Since $V_{\mathscr{X}}$ contains the set of discrete valuations on $k(x, y, z)$ that are trivial on $k(x, y)$, by Proposition \ref{WHP over one-variable rational function field}\ref{part a}, the Hasse principle for isometry holds over $K$ with respect to $V_{\mathscr{X}}$. Moreover, by Proposition \ref{WHP over one-variable rational function field}\ref{part c}, quadratic forms over $K$ of dimensions two and three satisfy the $\SHP$ with respect to $V_{\mathscr{X}}$, as do four-dimensional quadratic forms over $K$ with trivial determinant. In particular, we see that in some instances the assumption in Theorem \ref{main theorem} that $m \ne 3$ is necessary. By Theorem \ref{main theorem}, there are counterexamples to the $\SHP$ over $K$ with respect to $V_{\mathscr{X}}$ in dimensions five through eight. Since $u(K) = 8$, quadratic forms of dimension ~$> 8$ over $K$ are isotropic, thus automatically satisfy the $\SHP$. The case of four-dimensional quadratic forms over $K$ with non-trivial determinant remains open.
\end{example}

Before proving Theorem \ref{main theorem}, we prove several results related to the Hasse principle for isotropy over finitely generated field extensions of fields $\ell \in \mathscr{A}_i(2)$ for some $i$ (defined in Section \ref{notation}). First we show that under certain assumptions, if quadratic forms of a particular dimension $m$ satisfy the Hasse principle for isotropy, then so do quadratic forms of dimension $\geq m$.
\begin{prop}
\label{new SHP from old over A_i fields}
Let $\ell$ be a field of characteristic $\ne 2$, and suppose that $\ell \in \mathscr{A}_i(2)$ for some $i \geq 0$. For any $r \geq 1$ such that $i + r > 2$, let $L$ be a finitely generated field extension of transcendence degree $r$ over $\ell$. Let $V$ be a non-empty set of non-trivial divisorial discrete valuations on $L$, trivial on $\ell$, that satisfies the finite support property. Then for any integer $m$ such that 
\[
	2^{i + r - 1} + 2 \leq m < 2^{i+r},
\] 
if regular $m$-dimensional quadratic forms over $L$ satisfy the Hasse principle for isotropy with respect to $V$, then so do regular $(m+1)$-dimensional quadratic forms over $L$.
\end{prop}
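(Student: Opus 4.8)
The plan is to descend from dimension $m+1$ to dimension $m$ by splitting off a well-chosen one-dimensional form. Suppose $\psi$ is a regular $(m+1)$-dimensional quadratic form over $L$ that is isotropic over $L_v$ for every $v\in V$; I want to produce an element $a\in D_L(\psi)$ for which the $m$-dimensional form $\psi'$ defined by $\psi\simeq\langle a\rangle\perp\psi'$ is isotropic over $L_v$ for all $v\in V$. Granting this, the hypothesis that $m$-dimensional forms satisfy the Hasse principle for isotropy with respect to $V$ forces $\psi'$ to be isotropic over $L$, whence $\psi$ is isotropic over $L$, as desired. Diagonalizing $\psi\simeq\langle a_1,\dots,a_{m+1}\rangle$ and invoking the finite support property, the set $S=\{v\in V: v(a_j)\neq 0\text{ for some }j\}$ is finite; note also that every $v\in V$ has residue characteristic $\neq 2$ because $v$ is trivial on $\ell$.

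The main point I would establish is that for $v\notin S$ the splitting works automatically, for \emph{any} choice of $a$. For such $v$ all the $a_j$ are units, so $\psi$ reduces to a regular $(m+1)$-dimensional form $\bar\psi$ over $\kappa_v$. Since $v$ is divisorial and trivial on $\ell$, the field $\kappa_v$ is finitely generated of transcendence degree $r-1$ over $\ell$, so $\kappa_v\in\mathscr{A}_{i+r-1}(2)$ and $u(\kappa_v)\le 2^{i+r-1}$ (Section \ref{notation}). Because $\dim\bar\psi=m+1\ge 2^{i+r-1}+3$, the anisotropic part of $\bar\psi$ has dimension $\le 2^{i+r-1}$, so the Witt index of $\bar\psi$ is at least $(m+1-2^{i+r-1})/2\ge 3/2$, i.e.\ at least $2$; hence $\bar\psi\simeq 2\mathbb{H}\perp\bar\tau$ over $\kappa_v$, and since all entries of $\psi$ are units a standard lifting argument (via Springer's Theorem) gives $\psi_{L_v}\simeq 2\mathbb{H}\perp\tau$ with $\tau$ a lift of $\bar\tau$. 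Then for any $a\in L^\times$ we have $\langle a\rangle\subseteq\mathbb{H}$, so $\psi_{L_v}\simeq\langle a\rangle\perp(\langle -a\rangle\perp\mathbb{H}\perp\tau)$, and therefore $\psi'_{L_v}\simeq\langle -a\rangle\perp\mathbb{H}\perp\tau$ is isotropic.

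Thus only the finitely many $v\in S$ impose conditions on $a$. For $v\in S$ whose $\psi_{L_v}$ already has Witt index $\ge 2$, the previous paragraph again gives no condition. For $v\in S$ with $\psi_{L_v}\simeq\mathbb{H}\perp\psi_{v,\mathrm{an}}$ of Witt index exactly $1$, Witt cancellation shows $\psi'_{L_v}$ is isotropic as soon as $a\in D_{L_v}(\psi_{v,\mathrm{an}})$; since $\dim\psi_{v,\mathrm{an}}=m-1\ge 1$, this set is a nonempty union of square classes in $L_v^\times$, hence open ($L_v^{\times 2}$ is open since $\Char\kappa_v\neq 2$). Next I would pick, for each $v\in S$, a vector $\mathbf{x}_v\in L_v^{m+1}$ with $\psi(\mathbf{x}_v)\neq 0$ and, in the Witt index $1$ case, with $\psi(\mathbf{x}_v)\in D_{L_v}(\psi_{v,\mathrm{an}})$; by weak approximation over $L$ for the finitely many pairwise inequivalent discrete valuations in $S$, choose $\mathbf{x}\in L^{m+1}$ that is $v$-adically close to each $\mathbf{x}_v$. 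For a sufficiently sharp approximation, $a:=\psi(\mathbf{x})$ lies in $L^\times$ and in each of the finitely many open sets above (if $S=\emptyset$, simply take $a=a_1$). Then $a\in D_L(\psi)$ and the resulting $\psi'$ is isotropic over every $L_v$, $v\in V$, so the hypothesis on $m$-dimensional forms completes the proof.

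The main obstacle is exactly the finitely many valuations in $S$ — those of bad reduction, or of good reduction but Witt index only $1$ — where the splitting is not free; I expect to handle these by combining the finite support property, weak approximation, and the openness of sets of represented values over complete discretely valued fields. A secondary point to get right is the lifting/Springer argument computing the Witt index at a good-reduction place, together with the dimension bookkeeping that uses $i+r>2$ (so that $2^{i+r-1}\ge 4$ and the bound $m\ge 2^{i+r-1}+2$ forces $\dim\bar\psi\ge 2^{i+r-1}+3$, hence Witt index $\ge 2$).
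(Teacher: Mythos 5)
Your argument is correct, and it follows the same basic skeleton as the paper's proof (which itself adapts the classical Hasse--Minkowski descent from \cite[pp.~172]{lam}): both use the finite support property together with the bound $u(\kappa_v)\leq 2^{i+r-1}$ and Springer's Theorem to show that only finitely many $v\in V$ are problematic, then weak approximation at those finitely many places, and finally the $m$-dimensional Hasse principle applied to a codimension-one piece. The difference is in the decomposition: the paper fixes a binary subform $q_1=\langle a_1,a_2\rangle$ and an $(m-1)$-dimensional complement $q_2$, declares the bad set to be the (finite) set where $q_{2,v}$ is anisotropic, and uses weak approximation on the two coordinates of $q_1$ to produce $z\in D_L(q_1)$ lying in the prescribed local square classes, so that $\langle z\rangle\perp q_2$ is locally isotropic everywhere; you instead approximate a value $a=\psi(\mathbf{x})$ of the whole form and phrase the local conditions through the Witt index of $\psi_{L_v}$ (automatically $\geq 2$ at good-reduction places since $\dim\overline{\psi}\geq u(\kappa_v)+3$, and at the finitely many bad places either no condition or the open condition $a\in D_{L_v}(\psi_{v,\mathrm{an}})$). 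Your route buys a uniform treatment of the bad places and avoids singling out a binary subform, at the cost of two extra ingredients you should justify explicitly: the openness of $D_{L_v}$ of a form (a union of square classes, open because $L_v^{\times 2}$ is open when the residue characteristic is $\neq 2$), and the statement that a unit form over $L_v$ has Witt index at least that of its residue form (equivalently, $\psi_{L_v}\simeq 2\mathbb{H}\perp\tau$ when $\overline{\psi}\simeq 2\mathbb{H}\perp\overline{\tau}$); the latter does not follow from the anisotropy criterion alone as cited in the paper, but it does follow from the Witt-group form of Springer's Theorem (or a Hensel lifting of isometries of unimodular forms), so this is a presentational point rather than a gap. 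The paper's version needs only the anisotropy criterion plus the fact that square classes are open, which is why it sticks closer to Lam's original argument.
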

\begin{proof} This proof closely mirrors the proof of the Hasse-Minkowski Theorem for quadratic forms of dimension at least 5 found in \cite[pp. 172]{lam}.

Let $m$ be any integer such that $2^{i + r - 1} + 2 \leq m < 2^{i + r}$, and suppose regular $m$-dimensional quadratic forms over $L$ satisfy the $\SHP$ with respect to $V$. Let $q$ be an $(m + 1)$-dimensional quadratic form over $L$, and suppose that $q$ is isotropic over $L_v$ for all $v \in V$. Write $q = q_1 \perp q_2$, where $q_1 = \langle a_1, a_2 \rangle$, and $q_2 = \langle a_3, \ldots, a_{m + 1} \rangle$. Note that $\dim q_2 = m - 1 \geq 2^{i + r - 1} + 1$. 

Consider the following two disjoint subsets of $V$, whose union is $V$:
\begin{align*}
	S &= \left\{v \in V \bigm| q_{2, v} \text{ is isotropic over $L_v$} \right\}, \\
	T &= \left\{v \in V \bigm| q_{2, v} \text{ is anisotropic over $L_v$} \right\}.
\end{align*}
\underline{Claim}: $T$ is a finite set. 

Indeed, let $U \subseteq V$ be the subset defined by
\[
	U = \left\{v \in V \bigm| v(a_3) = v(a_4) = \cdots = v(a_{m+1}) = 0 \right\}.
\]
The set $V$ satisfies the finite support property, so $V \setminus U$ is a finite set. For any $v \in U$, each entry of ~$q_{2,v}$ is a unit in $\mathcal{O}_v$, so $q_{2,v}$ reduces to an $(m - 1)$-dimensional quadratic form over the residue field $\kappa_v$. Each such $v$ is a divisorial discrete valuation on $L$ that is trivial on $\ell$, so $\kappa_v$ is a finitely generated field extension of transcendence degree $r - 1$ over the field $\ell \in \mathscr{A}_i(2)$. Hence $u\left(\kappa_v\right) \leq 2^{i + r - 1} < m - 1$ (see Section ~\ref{notation}). Therefore, for any $v \in U$, $q_{2, v}$ is isotropic over $L_v$ by Springer's Theorem, hence $U \subseteq S$. This implies that $T = V \setminus S$ is contained in the finite set $V \setminus U$, proving the claim.

For any $v \in T$, because $q$ is isotropic over $L_v$, there exists some $z_v \in L_v^{\times}$ such that $z_v \in D_{L_v}(q_{1,v})$ and $-z_v \in D_{L_v}(q_{2,v})$. Thus, for any $v \in T$, we can write
\[
	z_v = a_1 x_v^2 + a_2 y_v^2 = q_{1,v}(x_v, y_v)
\]
for some $x_v$, $y_v \in L_v$. Since $T$ is a finite set, by Weak Approximation we can find $x$, $y \in L$ sufficiently close to $x_v$, $y_v$, respectively, for all $v \in T$, so that the element
\[
	q_1(x, y) = a_1x^2 + a_2y^2 =: z \in L
\]
is as close as desired to $z_v \ne 0$ for every $v \in T$. So $z \ne 0$, and $x$ and $y$ can be selected so that $z_v / z$ is close enough to $1$ in $L_v$ to guarantee that $z_v$ and $z$ belong to the same square class of $L_v$ for all $v \in T$. 

Here $z \in D_L(q_1)$, so we may write $q_1 \simeq \langle z, w \rangle$ for some $w \in L^{\times}$. Let $q^* = \langle z \rangle \perp q_2$, so that $q \simeq \langle w \rangle \perp q^*$. We next observe that the $m$-dimensional quadratic form $q^*$ is isotropic over $L_v$ for all $v \in V$. Indeed, if $v \in S$, then $q_{2,v}$ is isotropic over $L_v$, so $q^*$ must be isotropic over $L_v$ as well. For $v \in T$, since $z$ and $z_v$ belong to the same square class of $L_v$ and $-z_v \in D_{L_v} (q_{2,v})$, we see that $-z \in D_{L_v}(q_{2,v})$ for all $v \in T$. Therefore $q^* = \langle z \rangle \perp q_2$ is isotropic over $L_v$ for all $v \in T$. So $q^*$ is isotropic over $L_v$ for all $v \in S \cup T = V$, as asserted. By assumption, this implies that $q^*$ is isotropic over $L$. Thus $q \simeq \langle w \rangle \perp q^*$ is isotropic over $L$ as well, completing the proof.
\end{proof}

\begin{cor}
\label{most dimensions}
Let $\ell$ be a field of characteristic $\ne 2$ such that $\ell \in \mathscr{A}_i(2)$ for some $i \geq 0$, and let ~$L$ be a finitely generated field extension of transcendence degree $r \geq 1$ over $\ell$ such that $i + r \geq 2$. Let ~$V$ be a non-empty set of non-trivial divisorial discrete valuations on $L$, trivial on $\ell$, that satisfies the finite support property, and suppose there exists a $2^{i+r}$-dimensional quadratic form over $L$ that violates the Hasse principle for isotropy with respect to $V$. Then for any integer $m$ such that 
\[
	2^{i + r -1} + 2 \leq m \leq 2^{i + r},
\] 
there exists an $m$-dimensional quadratic form over $L$ that violates the Hasse principle for isotropy with respect to $V$.
\end{cor}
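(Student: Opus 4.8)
The plan is a straightforward downward induction on $m$, with Proposition~\ref{new SHP from old over A_i fields} (read contrapositively) as the engine. First I would dispose of the boundary case $i+r=2$: there $2^{i+r-1}+2=4=2^{i+r}$, so the only admissible value is $m=2^{i+r}$, and the hypothesis of the corollary \emph{is} the desired conclusion. Hence one may assume $i+r\ge 3$, which is exactly the inequality $i+r>2$ needed to invoke Proposition~\ref{new SHP from old over A_i fields}; its remaining hypotheses ($\ell\in\mathscr{A}_i(2)$, $L/\ell$ finitely generated of transcendence degree $r$, and $V$ a nonempty set of nontrivial divisorial discrete valuations trivial on $\ell$ satisfying the finite support property) are precisely ours.

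Reading Proposition~\ref{new SHP from old over A_i fields} contrapositively gives: for every integer $m$ with $2^{i+r-1}+2\le m<2^{i+r}$, the existence of a regular $(m+1)$-dimensional quadratic form over $L$ violating the Hasse principle for isotropy with respect to $V$ forces the existence of such a form of dimension $m$. Now I would induct downward starting from $m=2^{i+r}$, where the required counterexample is supplied by the hypothesis of the corollary. At each step, having produced a counterexample of dimension $m+1$ with $2^{i+r-1}+3\le m+1\le 2^{i+r}$, the target dimension $m$ satisfies $2^{i+r-1}+2\le m<2^{i+r}$, so the contrapositive of Proposition~\ref{new SHP from old over A_i fields} applies and yields a counterexample of dimension $m$. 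Iterating from $2^{i+r}$ down to $2^{i+r-1}+2$ then covers every dimension in the stated interval.

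I do not expect any real obstacle: all of the substance lives in Proposition~\ref{new SHP from old over A_i fields}, and the corollary is merely the bookkeeping that iterates it. The only points needing care are checking that the smaller dimension produced at each step stays in the admissible range $[2^{i+r-1}+2,\,2^{i+r})$ throughout the induction, and treating the edge case $i+r=2$ separately as above.
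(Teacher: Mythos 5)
Your proposal is correct and is essentially the paper's argument: the paper handles the case $i+r=2$ the same way and then iterates Proposition~\ref{new SHP from old over A_i fields}, merely phrasing the descent as a contradiction (taking the largest $m^*$ in the range for which the Hasse principle holds and applying the proposition to $m^*$) rather than as your downward induction via the contrapositive. The two formulations are logically identical, so there is nothing substantive to add.
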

\begin{proof} If $i + r = 2$, the result is true by assumption since $2^{2-1} + 2 = 2^2$. So for $i + r > 2$, suppose by contradiction that the corollary is false. Let $m^*$ be the largest integer between $2^{i+r-1} + 2$ and $2^{i+r}$ such that $m^*$-dimensional quadratic forms over $L$ satisfy the $\SHP$ with respect to $V$. By assumption, $m^* < 2^{i+r}$. Applying Proposition \ref{new SHP from old over A_i fields}, since $m^*$-dimensional quadratic forms over $L$ satisfy the Hasse principle for isotropy with respect to $V$, then so do quadratic forms over $L$ of dimension $m^* + 1$. This contradicts the definition of $m^*$, and so for each $m$ such that $2^{i+r-1} + 2 \leq m < 2^{i+r}$, there must be an $m$-dimensional counterexample to the $\SHP$ over $L$ with respect to $V$.
\end{proof}

It remains to investigate the local-global behavior of quadratic forms with dimension $2^{n} + 1$ for some $n \geq 2$. Following \cite[Section 3]{auel-suresh}, over a field $k$ of characteristic $\ne 2$, given any elements $a_1, \ldots, a_n, d \in k^{\times}$, let $\langle \langle a_1, \ldots, a_n; d \rangle \rangle$ denote the $2^n$-dimensional quadratic form over $k$ obtained by multiplying the last entry, $a_1 \cdots a_n$, of the Pfister form $\langle \langle a_1, \ldots, a_n \rangle \rangle$ by $d$. For instance, if $n = 2$ we have
\[
	\langle \langle a_1, a_2 ; d \rangle \rangle = \langle 1, a_1, a_2, a_1a_2d \rangle.
\]
Such a form $\langle \langle a_1, \ldots, a_n; d \rangle \rangle$ is a \textit{twisted Pfister form} in the sense of Hoffmann \cite{hoffmann}.

As observed by Auel and Suresh \cite{auel-suresh}, by using a ``trick'' of Bogomolov, these twisted Pfister forms can be used to generate counterexamples to the Hasse principle for isotropy over function fields. Namely, let $k$ be an algebraically closed field of characteristic $\ne 2$. Let $K$ be a finitely generated field extension of transcendence degree $r \geq 1$ over $k$ and let $W$ be any non-empty set of non-trivial discrete valuations on $K$. According to Bogomolov's trick (see \cite[Proof of Theorem ~1.1]{bog}, \cite[Corollary ~1.2]{auel-suresh}), since $k$ is algebraically closed we can present $K$ as an odd degree extension of $k(x_1, \ldots, x_r)$ for some transcendence basis $x_1, \ldots, x_r$ of $K/k$. As such, any $w \in W$ restricts to a non-trivial discrete valuation on $k(x_1, \ldots, x_r)$, so let
\[
	V = \left\{\left.w\right|_{k(x_1, \ldots, x_r)} \bigm| w \in W\right\}.
\]
Suppose we have found a quadratic form $q$ over $k(x_1, \ldots, x_r)$ that violates the Hasse principle for isotropy with respect to $V$. Since $K$ is an odd degree extension of $k(x_1, \ldots, x_r)$, $q_K$ remains anisotropic over $K$ by Springer's Theorem on odd degree extensions \cite[Theorem ~VII.2.7]{lam}. For any $v \in V$, $q_v$ is isotropic over $k(x_1, \ldots, x_r)_v$, and since $v$ is the restriction of some $w \in W$, $k(x_1, \ldots, x_r)_v$ is contained in $K_w$. Hence $q_{K_w}$ is isotropic over $K_w$ for all $w \in W$, and therefore $q_K$ violates the $\SHP$ over $K$ with respect to $W$. In particular, these observations of Auel and Suresh \cite[Corollary ~1.2, Proposition ~1.3]{auel-suresh} prove the following:
\begin{lemma}[Auel-Suresh]
\label{reduction to rational function fields}
Let $K$ be any finitely generated field extension of transcendence degree $r \geq 1$ over an algebraically closed field $k$ of characteristic $\ne 2$. If there is an $n$-dimensional quadratic form over the rational function field $k(x_1, \ldots, x_r)$ that violates the Hasse principle for isotropy with respect to the set of all discrete valuations on $k(x_1, \ldots, x_r)$, then there is an $n$-dimensional quadratic form over $K$ that violates the Hasse principle for isotropy with respect to the set of all discrete valuations on $K$.
\end{lemma}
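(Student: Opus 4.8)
The plan is to make precise the Bogomolov-trick argument already sketched in the paragraph preceding the statement. First I would invoke Bogomolov's trick (\cite[Proof of Theorem~1.1]{bog}, \cite[Corollary~1.2]{auel-suresh}): since $k$ is algebraically closed and $K/k$ is finitely generated of transcendence degree $r$, there is a transcendence basis $y_1, \ldots, y_r$ of $K/k$ such that $K$ is a finite extension of odd degree of the purely transcendental subfield $F = k(y_1, \ldots, y_r) \subseteq K$. As $F$ is abstractly isomorphic to the rational function field $k(x_1, \ldots, x_r)$, the hypothesis of the lemma, transported along this isomorphism, provides an $n$-dimensional quadratic form $q$ over $F$ that is anisotropic over $F$ but isotropic over $F_v$ for every (non-trivial) discrete valuation $v$ on $F$.

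Next I would verify that $q_K$ is the desired counterexample over $K$. Anisotropy of $q_K$ over $K$ follows from Springer's theorem on odd-degree extensions \cite[Theorem~VII.2.7]{lam}, since $[K:F]$ is odd and $q$ is anisotropic over $F$. For local isotropy, let $w$ be any non-trivial discrete valuation on $K$ and set $v = w|_F$. Because $K/F$ is algebraic, the quotient $\Gamma_w/\Gamma_v$ of value groups is torsion, so $\Gamma_v$ cannot be trivial (an ordered abelian group is torsion-free); and as a non-trivial subgroup of $\Gamma_w \cong \Z$, it is again infinite cyclic. Hence $v$ is a non-trivial discrete valuation on $F$, and the completion $F_v$ embeds into $K_w$ (the closure of $F$ inside $K_w$ is a completion of $(F,v)$). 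Since $q_v$ is isotropic over $F_v$ by the choice of $q$, it is isotropic over the larger field $K_w$.

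Putting this together, $q_K$ is an $n$-dimensional quadratic form over $K$ that is anisotropic over $K$ yet isotropic over $K_w$ for every discrete valuation $w$ on $K$, i.e., it violates the Hasse principle for isotropy with respect to the set of all discrete valuations on $K$. I do not expect a genuine obstacle here: the one non-formal ingredient, Bogomolov's trick, is an external input, and everything else reduces to standard facts — stability of anisotropy under odd-degree extensions, stability of isotropy under arbitrary field extensions, and the elementary behaviour of discrete valuations under finite extensions — so the argument is essentially the preceding discussion spelled out carefully.
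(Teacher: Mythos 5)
Your proposal is correct and follows essentially the same route as the paper: Bogomolov's trick to realize $K$ as an odd-degree extension of a purely transcendental subfield, Springer's theorem on odd-degree extensions for anisotropy over $K$, and restriction of each discrete valuation $w$ on $K$ to a non-trivial discrete valuation $v$ on the rational subfield with $F_v \subseteq K_w$ for local isotropy. The extra details you supply (why $w|_F$ is non-trivial and discrete, why the completion embeds) are correct refinements of the argument the paper leaves implicit.
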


Now, for any algebraically closed field $k$ of characteristic $\ne 2$ that is not the algebraic closure of a finite field, let $k_0 \subset k$ be a subfield of $k$ equipped with a discrete valuation $v_0$ whose residue field has characteristic $\ne 2$. Then if $r \geq 2$ and we let
\[
	f_r = \prod_{i = 1}^r x_i(x_i - 1)(x_i - \lambda_i),
\]
where each $\lambda_i \in k_0 \setminus \{0, 1\}$ satisfies $v_0(\lambda_i) > 0$, \cite[Theorem ~4.1]{auel-suresh} states that over $k(x_1, \ldots, x_r)$, the $2^r$-dimensional twisted Pfister form $\langle \langle x_1, \ldots, x_r; f_r \rangle \rangle$ violates the Hasse principle for isotropy with respect to all discrete valuations on $k(x_1, \ldots, x_r)$. In particular, \cite[Theorem ~4.1]{auel-suresh}, together with Bogomolov's trick, proves the dimension $2^r$ case of Theorem \ref{main theorem}. We will use variants on the form $\langle \langle x_1, \ldots, x_r; f_r \rangle \rangle$ to prove the case of dimension $2^{r-1} + 1$.

\begin{lemma}
\label{anisotropic twisted Pfister form}
Let $k$ be any field of characteristic $\ne 2$. Then for any $r \geq 1$ the twisted Pfister form $\langle \langle x_1, \ldots, x_r; -1 \rangle \rangle$ is anisotropic over $k(x_1, \ldots, x_r)$.
\end{lemma}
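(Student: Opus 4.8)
The plan is to induct on $r$, using Springer's Theorem over the complete field $k(x_1,\dots,x_{r-1})((x_r))$ at each stage, in exactly the same spirit as the proof of Lemma \ref{anisotropic tensor product}.

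For the base case $r=1$, the form is $\langle\langle x_1;-1\rangle\rangle = \langle 1,-x_1\rangle$. Passing to $k((x_1))$ and writing $\langle 1,-x_1\rangle = \langle 1\rangle \perp x_1\cdot\langle -1\rangle$, both the first and second residue forms are the one-dimensional anisotropic forms $\langle 1\rangle$ and $\langle -1\rangle$ over $k$, so Springer's Theorem shows $\langle 1,-x_1\rangle$ is anisotropic over $k((x_1))$, hence over the subfield $k(x_1)$.

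For the inductive step the key point is the isometry
\[
	\langle\langle x_1,\dots,x_r;-1\rangle\rangle \simeq \langle\langle x_1,\dots,x_{r-1}\rangle\rangle \perp x_r\cdot\langle\langle x_1,\dots,x_{r-1};-1\rangle\rangle.
\]
This follows from the recursive structure $\langle\langle x_1,\dots,x_r\rangle\rangle = \langle\langle x_1,\dots,x_{r-1}\rangle\rangle \perp x_r\cdot\langle\langle x_1,\dots,x_{r-1}\rangle\rangle$ together with the observation that the distinguished last entry $x_1\cdots x_r$ of $\langle\langle x_1,\dots,x_r\rangle\rangle$ is $x_r$ times the last entry $x_1\cdots x_{r-1}$ of $\langle\langle x_1,\dots,x_{r-1}\rangle\rangle$, so it sits inside the $x_r$-scaled summand; scaling it by $-1$ therefore converts that summand into $x_r\cdot\langle\langle x_1,\dots,x_{r-1};-1\rangle\rangle$ and leaves the unscaled summand $\langle\langle x_1,\dots,x_{r-1}\rangle\rangle$ untouched. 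Now regard $\langle\langle x_1,\dots,x_r;-1\rangle\rangle$ over $k(x_1,\dots,x_{r-1})((x_r))$: the displayed decomposition is a Springer decomposition in the uniformizer $x_r$ whose first residue form $\langle\langle x_1,\dots,x_{r-1}\rangle\rangle$ is anisotropic over $k(x_1,\dots,x_{r-1})$ by Lemma \ref{anisotropic tensor product}, and whose second residue form $\langle\langle x_1,\dots,x_{r-1};-1\rangle\rangle$ is anisotropic over $k(x_1,\dots,x_{r-1})$ by the induction hypothesis. Springer's Theorem then gives anisotropy of $\langle\langle x_1,\dots,x_r;-1\rangle\rangle$ over $k(x_1,\dots,x_{r-1})((x_r))$, hence over the subfield $k(x_1,\dots,x_r)\cong k(x_1,\dots,x_{r-1})(x_r)$, completing the induction.

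I do not expect a genuine obstacle here: the one point needing a little care is the bookkeeping that identifies which summand the twisted last entry lands in, i.e. the justification of the displayed isometry; once that is in hand, the argument is a direct application of Springer's Theorem and the previously established Lemma \ref{anisotropic tensor product}.
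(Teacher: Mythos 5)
Your proof is correct, but it takes a different route from the paper. The paper's argument is a two-line reduction: over $F=k(\sqrt{-1})$ the twist disappears, i.e. $\langle\langle x_1,\dots,x_r;-1\rangle\rangle \simeq \langle\langle x_1,\dots,x_r\rangle\rangle$ over $F(x_1,\dots,x_r)$, and the latter is anisotropic there by Lemma \ref{anisotropic tensor product}; since $k(x_1,\dots,x_r)\subseteq F(x_1,\dots,x_r)$, anisotropy descends. You instead run a fresh induction on $r$, and your key identity
\[
\langle\langle x_1,\dots,x_r;-1\rangle\rangle \simeq \langle\langle x_1,\dots,x_{r-1}\rangle\rangle \perp x_r\cdot\langle\langle x_1,\dots,x_{r-1};-1\rangle\rangle
\]
is indeed an equality of diagonal forms (the twisted entry $-x_1\cdots x_r$ lies in the $x_r$-scaled summand, all other entries are untouched), so the Springer decomposition at the $x_r$-adic valuation of $k(x_1,\dots,x_{r-1})((x_r))$ is valid, with residue forms handled by Lemma \ref{anisotropic tensor product} and the induction hypothesis respectively; the base case is also fine. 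What each approach buys: yours is self-contained, staying entirely within the Springer-theorem technology already used for Lemma \ref{anisotropic tensor product}, and exhibits the explicit residue structure of the twisted form; the paper's is shorter, avoids the recursion bookkeeping, and in fact proves the slightly stronger statement that the form remains anisotropic after base change to $k(\sqrt{-1})(x_1,\dots,x_r)$.
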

\begin{proof}
Let $F = k(\sqrt{-1})$. Then over $F(x_1, \ldots, x_r)$, we have
\[
    \langle \langle x_1, \ldots, x_r; -1 \rangle \rangle_{F(x_1, \ldots, x_r)} \simeq \langle \langle x_1, \ldots, x_r \rangle \rangle.
\]
By Lemma \ref{anisotropic tensor product}, $\langle \langle x_1, \ldots, x_r \rangle \rangle$ is anisotropic over $F(x_1, \ldots, x_r)$ which contains $k(x_1, \ldots, x_r)$. So the form $\langle \langle x_1, \ldots, x_r; -1 \rangle \rangle$ must be anisotropic over $k(x_1, \ldots, x_r)$.
\end{proof}

\begin{lemma}
\label{global anisotropy}
Let $k$ be any field of characteristic $\ne 2$, and for any $r \geq 2$ let $K_r = k(x_1, \ldots, x_r)$. For $1 \leq i \leq r$, let $g_i(x_i) \in k[x_i]$ be polynomials of positive degree such that $g_i(0) \in k^{\times 2}$, and let $f_r = \prod_{i = 1}^r x_i g_i(x_i)$. Suppose the $2^r$-dimensional quadratic form $q_r = \langle \langle x_1, \ldots, x_r ; f_r \rangle \rangle$ is anisotropic over $K_r$. Then the $(2^r + 1)$-dimensional quadratic form
\[
	\widetilde{q}_r = q_r \perp \left\langle -x_{r+1}^2 - x_1 \cdots x_r \right\rangle
\]
is anisotropic over $K_{r+1} = k(x_1, \ldots, x_{r+1})$.
\end{lemma}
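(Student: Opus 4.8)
The plan is to argue by contradiction and to feed the hypothesis ``$q_r$ anisotropic over $K_r$'' into the Cassels--Pfister theorem. Suppose $\widetilde q_r$ were isotropic over $K_{r+1} = K_r(x_{r+1})$. Since $q_r$ is anisotropic over $K_r$ it stays anisotropic over $K_r(x_{r+1})$, so isotropy of $\widetilde q_r$ forces $q_r$ to represent the polynomial $x_{r+1}^2 + x_1\cdots x_r \in K_r[x_{r+1}]$ over $K_r(x_{r+1})$. As this polynomial has degree $2$ in $x_{r+1}$ and $q_r$ is anisotropic over $K_r$, the Cassels--Pfister theorem gives a representation $q_r(a + b\,x_{r+1}) = x_{r+1}^2 + x_1\cdots x_r$ with $a, b \in K_r^{2^r}$; expanding and comparing coefficients of $x_{r+1}^2$, $x_{r+1}$ and the constant term yields $q_r(b) = 1$, orthogonality of $a$ and $b$ with respect to the bilinear form of $q_r$, and $q_r(a) = x_1\cdots x_r$. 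Since $q_r(b) = 1$, the vector $b$ spans a nondegenerate line, $q_r \simeq \langle 1\rangle \perp q_r'$, and Witt cancellation identifies the restriction of $q_r$ to $b^\perp$ with $q_r'$; as $a \in b^\perp$, we conclude that $q_r'$ represents $x_1\cdots x_r$ over $K_r$. It therefore suffices to prove that $q_r' \perp \langle -x_1\cdots x_r\rangle$ is anisotropic over $K_r$.

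I would then rewrite this $2^r$-dimensional form. Writing $\langle\langle x_1,\ldots,x_r\rangle\rangle \simeq \langle 1\rangle \perp \tau \perp \langle x_1\cdots x_r\rangle$, so that $\tau$ collects the interior diagonal entries, and using that $f_r\cdot(x_1\cdots x_r) = (x_1\cdots x_r)^2\,g_1(x_1)\cdots g_r(x_r)$ has square class $g_1(x_1)\cdots g_r(x_r)$, one gets $q_r' \simeq \tau \perp \langle g_1(x_1)\cdots g_r(x_r)\rangle$, hence
\[
  q_r' \perp \langle -x_1\cdots x_r\rangle \;\simeq\; \langle\langle x_1,\ldots,x_r;-1\rangle\rangle' \;\perp\; \langle g_1(x_1)\cdots g_r(x_r)\rangle .
\]
To show this is anisotropic over $K_r$ I would peel off the variables $x_1, x_2, \ldots$ one at a time with Springer's Theorem, working over $k(x_2,\ldots,x_r)((x_1))$, then $k(x_3,\ldots,x_r)((x_2))$, and so on (each of these complete fields contains the relevant rational function field). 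The identities $\langle\langle x_j,\ldots,x_r\rangle\rangle' \simeq \langle\langle x_{j+1},\ldots,x_r\rangle\rangle' \perp x_j\langle\langle x_{j+1},\ldots,x_r\rangle\rangle$ and $\langle\langle x_j,\ldots,x_r;-1\rangle\rangle' \simeq \langle\langle x_{j+1},\ldots,x_r\rangle\rangle' \perp x_j\langle\langle x_{j+1},\ldots,x_r;-1\rangle\rangle$, together with the hypothesis $g_i(0)\in k^{\times 2}$ (which makes $g_j(x_j)$ a unit with square residue at the $x_j$-adic place), show that at each step the two residue forms are: in the uniformizer part, a copy of $\langle\langle x_{j+1},\ldots,x_r;-1\rangle\rangle$ at the first step and of $\langle\langle x_{j+1},\ldots,x_r\rangle\rangle$ at the later steps, which are anisotropic by Lemma~\ref{anisotropic twisted Pfister form} and Lemma~\ref{anisotropic tensor product} respectively; and in the unit part, the next form $\langle\langle x_{j+1},\ldots,x_r\rangle\rangle' \perp \langle g_{j+1}(x_{j+1})\cdots g_r(x_r)\rangle$ of the same shape. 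The recursion terminates at the binary form $\langle x_r, g_r(x_r)\rangle$, anisotropic over $k(x_r)$ because $-x_r g_r(x_r)$ has odd $x_r$-adic valuation and so is not a square. This gives anisotropy of $q_r'\perp\langle -x_1\cdots x_r\rangle$ over $K_r$, contradicting the conclusion of the Cassels--Pfister step, and proves the lemma.

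The crux, and the reason one cannot simply apply Springer's Theorem at a single discrete valuation of $K_{r+1}$, is that any such valuation (for instance the $x_{r+1}$-adic one, or the one attached to $x_{r+1}^2 + x_1\cdots x_r$) controls $\widetilde q_r$ only over the completion $K_{r+1,v}$, which is strictly larger than $K_{r+1}$; in fact $\widetilde q_r$ typically becomes isotropic over those completions, so descending the representability question from $K_r(x_{r+1})$ to the constant field $K_r$ genuinely requires Cassels--Pfister. A second, more technical, point to keep track of is that it is the \emph{pure} subforms $\langle\langle\cdots\rangle\rangle'$, not the full Pfister forms, that must be propagated through the iterated Springer argument, so that the base case is the anisotropic binary form $\langle x_r, g_r(x_r)\rangle$ rather than $\langle 1, 1\rangle$, which is isotropic whenever $\sqrt{-1}\in k$.
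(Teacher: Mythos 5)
Your proposal is correct and follows essentially the same route as the paper: the Cassels--Pfister/comparison-of-coefficients step is exactly the representation theorem the paper cites (\cite[Theorem IX.2.1]{lam}) to reduce the lemma to anisotropy of $q_r' \perp \langle -x_1\cdots x_r\rangle$ over $K_r$, and your iterated Springer descent, with the uniformizer residues $\langle\langle x_2,\ldots,x_r;-1\rangle\rangle$ and $\langle\langle x_{j+1},\ldots,x_r\rangle\rangle$ handled by Lemmas~\ref{anisotropic twisted Pfister form} and~\ref{anisotropic tensor product} and the unit $g_j(0)\in k^{\times 2}$ absorbed at each step, is the paper's induction on the form $\varphi_r$ in only slightly different bookkeeping (you peel $x_2,x_3,\ldots$ while the paper peels the last variable). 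No gaps.
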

\begin{proof}
We first observe that
\[
	q_r = \langle \langle x_1, \ldots, x_r; f_r \rangle \rangle \simeq \left\langle 1, x_1, \ldots, x_r, \ldots, x_2 \cdots x_r, \prod_{i = 1}^r g_i(x_i) \right\rangle.
\]
Moreover, for each $i = 1, \ldots, r$, since $g_i(0) \ne 0$, $g_i(x_i)$ is a unit in $\mathcal{O}_{v_{x_i}}$, with reduction $g_i(0) \in \kappa_{x_i}$. The form $q_r$ is anisotropic over $K_r$, and we may write $q_r \simeq \langle 1 \rangle \perp q_r'$, so by \cite[Theorem ~IX.2.1]{lam}, 
\[
	x_1 \cdots x_r \in D_{K_r}\left(q_r'\right) \Longleftrightarrow x_{r+1}^2 + x_1 \cdots x_r \in D_{K_{r+1}}(q_r).
\]
\underline{Claim}: $x_1 \cdots x_r \not\in D_{K_r}\left(q_r'\right)$. 

The claim implies that $q_r$ does not represent $x_{r+1}^2 + x_1 \cdots x_r$ over $K_{r+1}$; or equivalently, the quadratic form
\[
	\widetilde{q}_r = q_r \perp \left\langle -x_{r+1}^2 - x_1 \cdots x_r \right\rangle 
\]
is anisotropic over $K_{r+1}$. It therefore suffices to prove the claim, which is equivalent to showing that the form $q_r' \perp \langle -x_1 \cdots x_r \rangle$ is anisotropic over $K_r$.

We prove the stronger claim, that $q_r' \perp \langle -x_1 \cdots x_r\rangle$ is anisotropic over the $x_1$-adic completion of ~$K_r$, which is $k(x_2, \ldots, x_r)((x_1))$, with residue field $k(x_2, \ldots, x_r)$. Since
\[
	q_r' \perp \langle -x_1 \cdots x_r \rangle \simeq \langle \langle x_2, \ldots, x_r \rangle \rangle' \perp \left\langle \prod_{i = 1}^r g_i(x_i) \right\rangle \perp x_1 \cdot \langle \langle x_2, \ldots, x_r; -1 \rangle \rangle,
\]
where $\langle \langle x_2, \ldots, x_r \rangle \rangle'$ is the pure subform of $\langle \langle x_2, \ldots, x_r \rangle \rangle$, the second residue form of $q_r' \perp \langle -x_1 \cdots x_r \rangle$ is the twisted Pfister form $\langle \langle x_2, \ldots, x_r; -1 \rangle \rangle$, which is anisotropic over $k(x_2, \ldots, x_r)$ by Lemma ~\ref{anisotropic twisted Pfister form}. By Springer's Theorem, to prove the claim it suffices to show that the first residue form of $q_r' \perp \langle -x_1 \cdots x_r \rangle$ is anisotropic over $k(x_2, \ldots, x_r)$. The first residue form is
\[
	\varphi_r := \langle \langle x_2, \ldots, x_r \rangle \rangle' \perp \left\langle g_1(0) \prod_{i = 2}^r g_i(x_i) \right\rangle
	\simeq \langle \langle x_2, \ldots, x_r \rangle \rangle' \perp \left\langle \prod_{i = 2}^r g_i(x_i) \right\rangle,
\]
where this last isometry follows because $g_1(0) \in k^{\times}$ is a square. We now prove, by induction on $r \geq 2$, that $\varphi_r$ is anisotropic over $k(x_2, \ldots, x_r)$.

First, suppose $r = 2$. Then 
\[
	\varphi_2 \simeq \langle\langle x_2 \rangle \rangle' \perp \langle g_2(x_2) \rangle = \langle g_2(x_2) \rangle \perp x_2 \cdot \langle 1 \rangle.
\]
Now consider $\varphi_2$ over $k((x_2))$. The first residue form of $\varphi_2$ is $\langle g_2(0) \rangle$, and the second residue form of $\varphi_2$ is $\langle 1 \rangle$. Both residue forms are anisotropic over $k$, so by Springer's Theorem, $\varphi_2$ is anisotropic over $k((x_2)) \supset k(x_2)$, proving the base case.

Now suppose that for some $r \geq 2$, $\varphi_r$ is anisotropic over $k(x_2, \ldots, x_r)$, and consider $\varphi_{r + 1}$ over $k(x_2, \ldots, x_r)((x_{r+1}))$, whose residue field is $k(x_2, \ldots, x_r)$. We have
\begin{align*}
	\varphi_{r + 1} &\simeq \langle \langle x_2, \ldots, x_r, x_{r + 1} \rangle \rangle' \perp \left\langle \prod_{i = 2}^{r+1} g_i(x_i) \right\rangle \\
	&= \langle\langle x_2, \ldots, x_r \rangle \rangle' \perp \left\langle \prod_{i = 2}^{r+1} g_i(x_i) \right\rangle \perp x_{r+1} \cdot \langle \langle x_2, \ldots, x_r \rangle \rangle.
\end{align*}
The second residue form of $\varphi_{r+1}$ is $\langle \langle x_2, \ldots, x_r \rangle \rangle$, which is anisotropic over $k(x_2, \ldots, x_r)$ by Lemma ~\ref{anisotropic tensor product}. Since $g_{r+1}(0) \in k^{\times}$ is a square, the first residue form of $\varphi_{r + 1}$ is
\begin{align*}
	\langle \langle x_2, \ldots, x_r \rangle \rangle' \perp \left\langle g_{r+1}(0) \prod_{i = 2}^r g_i(x_i) \right\rangle &\simeq \langle \langle x_2, \ldots, x_r \rangle\rangle' \perp \left\langle \prod_{i = 2}^r g_i(x_i) \right\rangle
	\simeq \varphi_r.
\end{align*}

By the induction hypothesis, $\varphi_r$ is anisotropic over $k(x_2, \ldots, x_r)$, so the first residue form of $\varphi_{r+1}$ is anisotropic. Both residue forms of $\varphi_{r+1}$ are anisotropic over $k(x_2, \ldots, x_r)$, so $\varphi_{r+1}$ is anisotropic over $k(x_2, \ldots, x_r)((x_{r+1}))$, which contains $k(x_2, \ldots, x_r, x_{r+1})$, completing the proof of the claim by induction, and the proof of the lemma as a whole.
\end{proof}

\begin{lemma}
\label{local isotropy}
Let $\ell$ be a field of characteristic $\ne 2$ such that $\ell \in \mathscr{A}_i(2)$ for some $i \geq 2$. Let $a_1, \ldots, a_i, d \in \ell^{\times}$ be elements such that $-a_1 \cdots a_i \not\in \ell^{\times 2}$ and the twisted Pfister form over $\ell$ defined by $q_i = \langle \langle a_1, \ldots, a_i; d \rangle \rangle$ is isotropic over $\ell_v$ for all discrete valuations $v$ on $\ell$. Then the $(2^i + 1)$-dimensional quadratic form $\widetilde{q}_i$ over $\ell(x)$ defined by
\[
	\widetilde{q}_i = q_i \perp \left\langle -x^2 - a_1 \cdots a_i \right\rangle
\]
is isotropic over $\ell(x)_w$ for all discrete valuations $w$ on $\ell(x)$.
\end{lemma}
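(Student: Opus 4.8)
The plan is to show that $\widetilde{q}_i$ is isotropic over every completion $\ell(x)_w$ by splitting the discrete valuations $w$ on $\ell(x)$ according to their restriction to $\ell$. Since $w$ is $\mathbb{Z}$-valued, the group $w(\ell^\times)$ is a subgroup of $\mathbb{Z}$, hence trivial or of the form $n\mathbb{Z}$; so $w|_\ell$ is either trivial or equivalent to a discrete valuation $v$ on $\ell$.

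First I would dispose of the case where $w|_\ell$ is nontrivial. Then the closure of $\ell$ inside the complete field $\ell(x)_w$ is a completion of $\ell$ with respect to $v$, so $\ell_v$ embeds into $\ell(x)_w$; by hypothesis $q_i$ is isotropic over $\ell_v$, hence over $\ell(x)_w$, and therefore so is $\widetilde{q}_i = q_i \perp \langle -x^2 - a_1 \cdots a_i \rangle$. This requires essentially no computation.

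The substance lies in the valuations $w$ that are trivial on $\ell$, which by \cite[Theorem~2.1.4]{ep} are the valuations $v_\pi$ for monic irreducible $\pi \in \ell[x]$ together with the degree valuation $v_\infty$. Write $\delta = a_1 \cdots a_i$. The key observation is that $x^2 + \delta$ is \emph{irreducible} over $\ell$ precisely because $-\delta \notin \ell^{\times 2}$; call it $\pi_0$, so that $-x^2 - \delta = -\pi_0$ and $\kappa_{\pi_0} \cong \ell(\sqrt{-\delta})$. For $v_\infty$, working over $\ell((1/x))$ one has $-x^2 - \delta = -x^2(1 + \delta x^{-2})$ with $1 + \delta x^{-2}$ a square by Hensel's lemma and $x^2$ a square, so $\langle -x^2 - \delta \rangle \simeq \langle -1 \rangle$ and $\widetilde{q}_i \simeq q_i \perp \langle -1 \rangle$, which is isotropic since $q_i$ represents $1$. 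For $v_\pi$ with $\pi \ne \pi_0$, every entry of $\widetilde{q}_i$ is a unit at $v_\pi$ (the entries of $q_i$ lie in $\ell^\times$, and $-x^2 - \delta = -\pi_0$ is a unit), so $\widetilde{q}_i$ reduces to a $(2^i + 1)$-dimensional form over the finite extension $\kappa_\pi$ of $\ell$; since $\kappa_\pi \in \mathscr{A}_i(2)$ we get $u(\kappa_\pi) \le 2^i < 2^i + 1$ (see Section~\ref{notation}), the residue form is isotropic, and Springer's Theorem gives the claim over $\ell(x)_\pi$.

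The remaining case $w = v_{\pi_0}$ is the one I expect to be the main obstacle, and it is where the hypotheses $-a_1 \cdots a_i \notin \ell^{\times 2}$ and $i \ge 2$ get used. Since $-x^2 - \delta = -\pi_0$, Springer's Theorem reduces isotropy of $\widetilde{q}_i = q_i \perp \pi_0 \langle -1 \rangle$ over $\ell(x)_{\pi_0}$ to isotropy of the residue form $(q_i)_{\kappa_{\pi_0}}$ over $\kappa_{\pi_0} = \ell(\sqrt{-\delta})$. Rather than computing the whole Witt class, I would exhibit an explicit isotropic binary subform: among the $2^i$ diagonal entries of $q_i = \langle \langle a_1, \ldots, a_i; d \rangle \rangle$ — those indexed by subsets of $\{1, \ldots, i\}$, with only the entry indexed by $\{1, \ldots, i\}$ twisted by $d$ — the entries $a_i$ and $a_1 \cdots a_{i-1}$ both occur untwisted (this uses $i \ge 2$), and $\langle a_i, a_1 \cdots a_{i-1} \rangle$ is isotropic over $\ell(\sqrt{-\delta})$ because $-a_i(a_1 \cdots a_{i-1}) = -\delta$ is a square there. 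Hence $(q_i)_{\kappa_{\pi_0}}$ is isotropic and this case — hence the lemma — follows. The delicate point is spotting the right binary subform of the twisted Pfister form; the rest is routine Springer's Theorem together with the $u$-invariant bound for $\mathscr{A}_i(2)$ fields.
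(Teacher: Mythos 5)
Your proposal is correct and follows essentially the same route as the paper: the same case split (valuations nontrivial on $\ell$, the degree valuation, the special prime $x^2 + a_1 \cdots a_i$, and all other primes via the $u$-invariant bound for $\mathscr{A}_i(2)$ fields and Springer's Theorem). The only cosmetic differences are your choice of untwisted binary subform $\langle a_i, a_1 \cdots a_{i-1} \rangle$ where the paper uses $\langle a_1, a_2 \cdots a_i \rangle$, and your use of Hensel's lemma at the degree valuation where the paper scales by $x^{-2}$ and reads off the residue form.
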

\begin{proof}
We prove the lemma by considering several cases for the discrete valuation $w$ on $\ell(x)$. 

\underline{Case 1}: $w$ is non-trivial on $\ell$. 

In this case, if $v = \left.w\right|_{\ell}$, then $\ell_v$ is contained in $\ell(x)_w$, and $q_i$ is isotropic over $\ell_v$ by assumption. So $\widetilde{q}_i$ is isotropic over $\ell(x)_w$.
	
	The remaining cases cover the situation when $w$ is trivial on $\ell$.
	
\underline{Case 2}: $w = w_{\infty}$ is the degree valuation with respect to $x$. Thus, $\ell(x)_w = \ell\left(\left(x^{-1}\right)\right)$.

Multiplying the last entry of $\widetilde{q}_i$ by $x^{-2}$, we have
	\[
		\widetilde{q}_i \simeq q_i \perp \left\langle -1 - a_1\cdots a_i x^{-2} \right\rangle.
	\]
	Now $-1 - a_1 \cdots a_i x^{-2}$ is an $x^{-1}$-adic unit with reduction $-1$. Since $\langle 1 \rangle$ is a subform of $q_i$, the first residue form of $\widetilde{q}_i$ over $\kappa_w$ contains $\langle 1, -1 \rangle$, which is isotropic. Therefore $\widetilde{q}_i$ is isotropic over $\ell(x)_w$ by Springer's Theorem.
	
\underline{Case 3}: $w = w_{\pi}$ is the $\pi$-adic valuation for $\pi = x^2 + a_1 \cdots a_i$, which is irreducible since $-a_1 \cdots a_i \not\in ~\ell^{\times 2}$.

In this case, over the residue field $\kappa_{\pi} \cong \ell\left(\sqrt{-a_1 \cdots a_i}\right)$ we have
	\[
		a_1 = \frac{\left(\sqrt{-a_1 \cdots a_i}\right)^2}{-a_2 \cdots a_i}.
	\]
	The form $\widetilde{q}_i$ contains the subform $\langle a_1, a_2 \cdots a_i \rangle$, whose residue form mod $\pi$ is
	\[
		\overline{\langle a_1, a_2 \cdots a_i \rangle} = \left\langle \frac{\left(\sqrt{-a_1 \cdots a_i}\right)^2}{-a_2 \cdots a_i}, a_2 \cdots a_i \right\rangle \simeq \langle -a_2 \cdots a_i, a_2 \cdots a_i \rangle,
	\]
	which is isotropic over $\kappa_{\pi}$. Thus the first residue form of $\widetilde{q}_i$ is isotropic over $\kappa_{\pi}$, so $\widetilde{q}_i$ must be isotropic over $\ell(x)_{\pi}$.

\underline{Case 4}: $w = w_{\pi}$, where $\pi \in \ell[x]$ is any monic irreducible polynomial different from $x^2 + a_1 \cdots a_i$. 

In this case, each entry of $\widetilde{q}_i$ is a unit in $\mathcal{O}_{v_{\pi}}$, so $\widetilde{q}_i$ reduces to a $(2^i + 1)$-dimensional form over ~$\kappa_{\pi}$. The field $\kappa_{\pi}$ is a finite extension of $\ell \in \mathscr{A}_i(2)$, so $\kappa_{\pi} \in \mathscr{A}_i(2)$. Therefore $u\left(\kappa_{\pi}\right) \leq 2^i$. So the first residue form of $\widetilde{q}_i$ must be isotropic over $\kappa_{\pi}$, which implies that $\widetilde{q}_i$ is isotropic over $\ell(x)_{\pi}$.

These cases cover all possibilities for discrete valuations on $\ell(x)$, so the proof is complete.
\end{proof}

We can now prove Theorem \ref{main theorem}.
\begin{proof}[Proof of Theorem \ref{main theorem}]
By \cite[Theorem ~1]{auel-suresh}, there is a $2^r$-dimensional quadratic form over $K$ that violates the Hasse principle for isotropy with respect to $V$. This completes the proof if $r = 2$, so suppose $r \geq 3$. The field $k$ is algebraically closed, so $k \in \mathscr{A}_0(2)$. Moreover, any discrete valuation ~$v$ on $K$ is trivial on $k$ since any $x \in k^{\times}$ has $n$-th roots for all $n \in \mathbb{Z}$, so $v(x) \in \mathbb{Z}$ must be divisible by all $n \in \mathbb{Z}$, hence $v(x) = 0$. So by Corollary ~\ref{most dimensions}, for any $m$ such that $2^{r-1} + 2 \leq m \leq 2^r$, there is an $m$-dimensional quadratic form over $K$ that violates the Hasse principle for isotropy with respect to $V$. It therefore remains to find a quadratic form of dimension $2^{r-1} + 1$ over $K$ that violates the Hasse principle for isotropy with respect to $V$.

The set $V$ is contained in the set of all discrete valuations on $K$, so by Lemma \ref{reduction to rational function fields}, the proof will be complete if we can find a $(2^{r-1} + 1)$-dimensional quadratic form over the rational function field $k(x_1, \ldots, x_r)$ that violates the Hasse principle for isotropy with respect to all discrete valuations on $k(x_1, \ldots, x_r)$. Let $k_0 \subset k$ be a subfield with a discrete valuation ~$v_0$ with residue characteristic ~$\ne 2$, and for $1 \leq i \leq r - 1$, let $\lambda_i \in k_0 \setminus \{0, 1\}$ be elements such that $v_0(\lambda_i ) > 0$. If we let 
\[
	f_{r-1} = \prod_{i = 1}^{r-1} x_i(x_i - 1)(x_i - \lambda_i),
\]
then by \cite[Theorem ~4.1]{auel-suresh}, the quadratic form $q_{r-1} = ~\langle \langle x_1, \ldots, x_{r-1}; f_{r-1} \rangle \rangle$ over $k(x_1, \ldots, x_{r-1})$ violates the Hasse principle for isotropy with respect to the set of all discrete valuations on $k(x_1, \ldots, x_{r-1})$. That is, $q_{r-1}$ is anisotropic over $k(x_1, \ldots, x_{r-1})$, but is isotropic over the completion at each discrete valuation on that field. Because the field $k$ is algebraically closed, each ~$\lambda_i$ appearing in $f_{r-1}$ is a square in ~$k^{\times}$, so by Lemma \ref{global anisotropy}, the $(2^{r-1} +1)$-dimensional form over $k(x_1, \ldots, x_r)$ defined by 
\[
	\widetilde{q}_{r-1} = q_{r-1} \perp \left\langle -x_r^2 - x_1 \cdots x_{r-1} \right\rangle
\]
is anisotropic over $k(x_1, \ldots, x_r)$. The field $k \in \mathscr{A}_0(2)$, so $k(x_1, \ldots, x_{r-1}) \in ~\mathscr{A}_{r-1}(2)$ by \cite[Theorem ~2.3]{leep}. Therefore, by Lemma \ref{local isotropy} where $\ell = k(x_1, \ldots, x_{r-1})$, the form $\widetilde{q}_{r-1}$ is isotropic over $k(x_1, \ldots, x_r)_v$ for all discrete valuations $v$ on $k(x_1, \ldots, x_r)$. Thus $\widetilde{q}_{r-1}$ violates the Hasse principle for isotropy with respect to all discrete valuations on $k(x_1, \ldots, x_r)$, completing the proof.
\end{proof}

\begin{ack} The author would like to thank David Harbater, Julia Hartmann, Daniel Krashen, and Florian Pop for inspiring discussions and comments regarding the material of this article. The author would also like to thank an anonymous referee for helpful comments on an earlier version of the article. This paper is part of the author's Ph.D. thesis, currently being prepared under the supervision of David Harbater at the University of Pennsylvania.
\end{ack}

\providecommand{\bysame}{\leavevmode\hbox to3em{\hrulefill}\thinspace}
\providecommand{\href}[2]{#2}

\medskip

\noindent{\bf Author Information:}\\

\noindent Connor Cassady\\
Department of Mathematics, University of Pennsylvania, Philadelphia, PA 19104-6395, USA\\
email: cassadyc@sas.upenn.edu


\begin{thebibliography}{KMRT98}

\bibitem[Art62]{artin}
M.\ Artin,
\textit{Grothendieck Topologies},
Dept. of Mathematics, Harvard University, Cambridge, MA, 1962.

\bibitem[AS22]{auel-suresh}
A.\ Auel and V.\ Suresh,
\textit{Failure of the local-global principle for isotropy of quadratic forms over function fields},
Preprint, 2022, arXiv:1709.03707v2.

\bibitem[Bog95]{bog}
F.\ A.\ Bogomolov,
\textit{On the structure of Galois groups of the fields of rational functions},
{$K$}-theory and algebraic geometry: connections with quadratic forms
and division algebras ({S}anta {B}arbara, {CA}, 1992), 83--88, Proc.\ Sympos.\ Pure Math.\ \textbf{58}, Part 2, Amer.\ Math.\ Soc., Providence, RI, 1995.

\bibitem[CRR19]{spinor groups}
V.I.\ Chernousov, A.S.\ Rapinchuk, I.A.\ Rapinchuk,
\textit{Spinor groups with good reduction},
Compos. Math. \textbf{155} (2019), no. 3, 484--527.

\bibitem[CT95]{colliot:santa_barbara}
J.-L.~Colliot-Th\'el\`ene,
\textit{Birational invariants, purity and the {G}ersten conjecture},
{$K$}-theory and algebraic geometry: connections with quadratic forms
and division algebras ({S}anta {B}arbara, {CA}, 1992), 1--64, Proc.\ Sympos.\ Pure Math.\ \textbf{58}, Part 1, Amer.\ Math.\ Soc., Providence, RI, 1995.

\bibitem[EKM08]{ekm}
R.\ Elman, N.A.\ Karpenko, A.S.\ Merkurjev,
\textit{The algebraic and geometric theory of quadratic forms},
American Mathematical Society Colloquium Publications \textbf{56},
Amer. Math. Soc., Providence, RI, 2008.

\bibitem[EP05]{ep}
A.J.\ Engler, A.\ Prestel,
\textit{Valued fields},
Springer Monographs in Mathematics,
Springer-Verlag, Berlin, 2005.

\bibitem[GMS03]{gms}
S.\ Garibaldi, A.S.\ Merkurjev, J.-P.\ Serre,
\textit{Cohomological invariants in Galois cohomology},
University Lecture Series \textbf{28}, Amer. Math. Soc., Providence, RI, 2003.

\bibitem[Har77]{hartshorne}
R.\ Hartshorne,
\textit{Algebraic geometry},
Graduate Texts in Mathematics \textbf{52}, Springer-Verlag, New York-Heidelberg, 1977.

\bibitem[Hof96]{hoffmann}
D.W.\ Hoffmann,
\textit{Twisted Pfister forms},
Doc. Math. \textbf{1} (1996), no. 03, 67--102.

\bibitem[KMRT98]{kmrt}
M.-A.\ Knus, A.S.\ Merkurjev, M.\ Rost, and J.-P.\ Tignol,
\textit{The book of involutions},
American Mathematical Society Colloquium Publications \textbf{44},
Amer. Math. Soc., Providence, RI, 1998.

\bibitem[Lam05]{lam}
T.Y.\ Lam,
\textit{Introduction to quadratic forms over fields},
Graduate Studies in Mathematics \textbf{67},
Amer. Math. Soc., Providence, RI, 2005. 

\bibitem[Lee13]{leep}
D.\ Leep,
\textit{The $u$-invariant of $p$-adic function fields},
J.\ Reine Angew. Math. \textbf{679} (2013), 65--73.

\bibitem[Mil69]{milnor}
J.\ Milnor,
\textit{Algebraic $K$-theory and quadratic forms},
Invent. Math. \textbf{9} (1969/70), 318--344.


\bibitem[Pfi79]{pfister}
A.\ Pfister,
\textit{Systems of quadratic forms},
Colloque sur les Formes Quadratiques, 2 (Montpellier, 1977),
Bull. Soc. Math. France M\'{e}m. \textbf{59} (1979), 115--123.

\bibitem[RR22]{rap}
A.S.\ Rapinchuk, I.A.\ Rapinchuk,
\textit{Some finiteness results for algebraic groups and unramified cohomology over higher-dimensional fields},
J. Number Theory \textbf{233} (2022), 228--260.



\end{thebibliography}
\end{document}